\def \R {\mathbb{R}}
\def \Si {\Sigma}
\newcommand{\cone}{{\times\!\!\!\!\times}}
\newtheorem{theorem}{Theorem}
\newtheorem{lemma}[theorem]{Lemma}
\newtheorem{proposition}[theorem]{Proposition}
\theoremstyle{definition}
\newtheorem{definition}{Definition}
\newtheorem{remark}{Remark}
\numberwithin{equation}{section}
\begin{document}

\title[Capacity  and Rigidity of Cornered Manifolds]{ Capacity inequalities and rigidity of\\
 cornered/conical manifolds
}

%    Information for first author
%    \thanks will become a 1st page footnote.
\thanks{This work was completed with the support of Cnpq/Brazil.}

%    Information for second author

\author{Tiarlos Cruz}
\address{%
Universidade Federal de Alagoas \\
 Instituto de Matem\'atica \\
 Macei\'o, AL -57072-970\\
 Brazil}
\email{cicero.cruz@im.ufal.br}

\keywords{Capacity, inverse mean curvature flow, rigidity, Riemannian penrose inequality, convex cone}

\begin{abstract}
We prove capacity inequalities involving the total mean curvature of hypersurfaces with boundary in convex cones and the mass of asymptotically flat manifolds with non-compact boundary. We then give the analogous of P\"olia-Szeg\"o, Alexandrov-Fenchel  and Penrose type inequalities in this setting. Among the techniques used in this paper are the inverse mean curvature flow for hypersurfaces  with boundary.
 \end{abstract}

\maketitle

\section{Introduction and statement of the results} \label{Sct intro}

In this paper we aim to provide some new integral inequalities in terms of well known geometric quantities. Some of them  are closely related to the isoperimetric problem  in the theory of convex cones.

 On the geometric side, we are interested in estimating the capacity of hypersurfaces in terms of the total mean curvature and the mass of asymptotically flat manifolds with non-compact boundary.  
	We point out that there are still few examples where the exact value of the capacity of a set is known, thus estimates by using geometric terms are of great interest. The most known sharp capacity estimates were obtained by Szeg\"o \cite{Sz, Sz2}, also including  rigidity statements.  From a physical point of view, 
	the capacity of a compact set $\Omega$ in a Riemannian manifold $M$ represents the electric charge flowing into $M\backslash \Omega$ through the boundary $\partial \Omega$ so that the electric potential of the field created by this charge is bounded by $1$, see e.g. \cite[$\mathsection$ 2.1]{PS}.

In part of this work, we focus on convex cones of $\mathbb{R}^{n}$ and our motivation is due to the existence of a lot of interest in quantitative estimates for isoperimetric inequalities and  existence of isoperimetric regions in such setting. Relevant contributions in this direction were obtained by several mathematicians including P. L. Lions, F. Pacella, J. Choe, F. Morgan, M. Ritor\'e, C. Rosales, A. Figali and E. Indrei (see, for instance,   
\cite{LP3,C,MR,RS,RV, FI} and references therein, as well as relevant discussion
in \cite{PT,LP}). 

Another important case treated here involves the mass of asymptotically flat manifolds with non-compact boundary, which is an invariant quantity of the asymptotic geometry. We make use of the mass to give upper bounds for the capacity. In fact, this is a natural question which was first studied by Bray in \cite{B} and later by Bray and Miao \cite{BM}, using the positive mass theorem and the monotonicity of the Hawking mass along the inverse mean curvature flow (IMCF), respectively.  Recently, a positive mass theorem on manifolds admitting non-compact boundary was settled by Almaraz, Barbosa and de Lima \cite{ABdL}, by adapting the classical  method established  by Schoen and Yau \cite{SY1,SY2} and Witten \cite{W}.

Building on the ideas of Freire and Schwartz  in their proof of mass-capacity inequalities  \cite{FS}, we obtain analogous results  by considering  now hypersurfaces with boundary evolving under inverse
mean curvature flow, an approach introduced by Marquadt \cite{M2, M3},  who constructed solutions  by rewriting the flow as an equation for the level set of a function whose advantage is to allow ``jumps'' in a natural way. For different approaches  of this geometric flow, we refer the reader to \cite{LS,LS2,M3}.

In what follows, let us consider the Euclidean cone  $C=\vec 0\cone U,$ where $U$ is a domain of the sphere $\mathbb S^{n-1},$ so the boundary $\partial C$ is the union of geodesic segments from vertex $\vec 0$ to the points of $\partial U.$ We let $\alpha_{n-1}$ denote  the volume of the unitary sector $C(\alpha,1):=C\cap\{|x|\leq1\}$ with solid angle $\alpha.$ Observe that if $U$ is an open half-sphere the cone coincides with an open half-space. 

Let $\Omega\subset C$ be a smooth domain. It is important to distinguish the boundary parts of $\Omega$ on $\partial C$ and in the interior of $C$ by writing 
\begin{equation}\label{key}
	\partial_{C}\Omega:=\overline{\partial \Omega\backslash \partial C}\;\;\;\textrm{and}\;\;\;\partial_{\partial C}\Omega:=\partial \Omega\backslash \partial_{C}\Omega.
\end{equation}
	
The  Dirichlet energy of a map $\phi:C\backslash \Omega\to\R$ is defined as  
$$
E(\phi):=\frac{1}{(n-2)\alpha_{n-1}}\int_{C\backslash \Omega}|D \phi|^2dv,
$$
where $dv$ is the volume element of $C\backslash \Omega$. 

We are now ready to give the following definition of capacity.	
	\begin{definition}
		
		Let $C$ be a cone centered at the origin.
		Assume that $\Omega\subset C$ is  a smooth bounded domain containing the vertex of the cone  such that $\Sigma=\partial_C \Omega$  meets $\partial C$ orthogonally. The {\it Capacity} of $\Sigma$  is given by 
		\begin{equation}\label{defcap}
		\mbox{Cap}(\Sigma)=\inf E(\phi),
		\end{equation}
		where the infimum is taken over all smooth  functions $\phi:C\backslash \Omega\rightarrow \R$ with  $\phi_{|_{\Sigma}}=0$ and approach to one at infinity. 
		
			\end{definition}

We state a rigidity result for free boundary outer-minimizing  mean convex 
domains (not necessarily connected) in convex cones (a P\'olya-Szeg\"o type inequality as in \cite[$\mathsection$ 2(15)]{Sz2}). Convexity of the cone here means  that the second fundamental form $\Pi$ of  $\partial C\backslash\vec 0$ with respect to the outward unit normal $\mu$ is non-negative. 

\begin{theorem}\label{main3}	
	Let $C\subset \mathbb{R}^{n}$ be a smooth  convex cone centered at the origin. Assume that  $\Omega$ is a smooth bounded domain containing the vertex of $C$ such that  $\Sigma=\partial_C\Omega$ meets $\partial C$ orthogonally and is strictly mean convex, i.e., $H>0$. If one of the following hypotheses holds.
	\begin{itemize}
		\item[I)]  $n\leq7;$ or
		\item[II)]  $n>7$, $\Sigma$ is free boundary outer-minimizing in $C\backslash\Omega$;
	\end{itemize}
	Then 
	\begin{equation*}
		\mbox{Cap}(\Sigma)\le  \frac 1{(n-1)\alpha_{n-1}}\int_{\Sigma}Hd\sigma.
	\end{equation*}
	Equality holds if and only $\Omega$ 
	is  the intersection of a ball centered
	at the vertex of cone with $C$. 
	%In particular, $\partial\Omega$ is a spherical cap.
\end{theorem}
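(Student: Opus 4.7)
The plan is to adapt the Bray--Miao / Freire--Schwartz strategy of building a capacity competitor from (weak) inverse mean curvature flow, now in the free-boundary setting inside the convex cone $C$ via Marquardt's framework. Under either hypothesis (I) or (II) I would obtain a proper weak IMCF $u\colon C\setminus\Omega\to[0,\infty)$ starting from $\Sigma$, satisfying the Neumann boundary condition $\langle\nabla u,\mu\rangle=0$ along $\partial C$. Its level sets $\Si_t=\{u=t\}$ are free-boundary hypersurfaces in $C$ meeting $\partial C$ orthogonally, with $H=|\nabla u|>0$ on regular levels, $|\Si_t|=e^t|\Si|$, and asymptotic to a round spherical sector centered at the vertex of the cone. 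Hypothesis (II) guarantees no jump at $t=0$, while hypothesis (I) ensures that minimizing hulls are smooth via the dimensional bound.

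As test function I would take
$$
\phi \;=\; 1 - e^{-\frac{n-2}{n-1}u},
$$
which vanishes on $\Sigma$ and tends to $1$ at infinity, hence is admissible for $\mbox{Cap}(\Sigma)$. Applying the coarea formula together with $|\nabla u|=H$ yields
$$
E(\phi) \;=\; \frac{n-2}{(n-1)^{2}\alpha_{n-1}}\int_{0}^{\infty} e^{-\frac{2(n-2)}{n-1}t}\int_{\Si_t} H\,d\sigma\,dt.
$$
The crux is then the monotonicity
$$
\int_{\Si_t} H\,d\sigma \;\leq\; e^{\frac{n-2}{n-1}t}\int_{\Sigma} H\,d\sigma,
$$
which in smooth regimes comes from
$$
\frac{d}{dt}\int_{\Si_t} H\,d\sigma \;=\; \int_{\Si_t}\Bigl(H - \tfrac{|A|^2}{H}\Bigr)d\sigma \;-\; \int_{\partial\Si_t}\partial_\eta(1/H)\,d\tau.
$$
The bulk integrand is controlled by $\tfrac{n-2}{n-1}\int H\,d\sigma$ via the pointwise Cauchy--Schwarz estimate $|A|^2\geq H^2/(n-1)$, and the boundary term is handled using $\eta=\mu$ (from the orthogonal free-boundary condition) together with the convexity $\Pi\geq 0$, exactly as in the monotonicity computations for other functionals under free-boundary flows in convex cones. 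Jumps to outer-minimizing hulls in the weak setting only decrease $\int_{\Si_t} H\,d\sigma$, so the bound persists across jumps. Substituting into the expression for $E(\phi)$ and evaluating the elementary exponential integral gives
$\mbox{Cap}(\Si)\le E(\phi)\le \frac{1}{(n-1)\alpha_{n-1}}\int_\Si H\,d\sigma.$

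For rigidity, equality forces saturation of the monotonicity at every $t$, so pointwise $|A|^2=H^2/(n-1)$ (each $\Si_t$ is totally umbilical) together with vanishing of the boundary contribution at every level. A connected, totally umbilical hypersurface in $\R^n$ with $H>0$ is a piece of a round sphere, and the orthogonality along $\partial C$ combined with the vertex lying in $\Omega$ forces the sphere to be centered at the vertex; the fact that $\Omega$ is a smooth domain containing the vertex then rules out multi-component $\Sigma$, so $\Omega=B\cap C$ for a ball $B$ centered at the origin.

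The main technical obstacle is the monotonicity step: producing a fully rigorous version for the weak free-boundary IMCF that (i) correctly handles the corner contribution $\partial\Si_t\subset\partial C$ using only the orthogonality and $\Pi\ge 0$, and (ii) verifies that the jumps to outer-minimizing hulls in Marquardt's weak flow preserve the bound on $\int_{\Si_t} H\,d\sigma$. This is the analogue, in the free-boundary setting of Marquardt, of the delicate jump analysis of Huisken--Ilmanen for the Geroch functional, and it is the place where the geometry of the convex cone genuinely enters.
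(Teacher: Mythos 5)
Your proposal follows essentially the same route as the paper: run Marquardt's (weak) free--boundary IMCF from $\Sigma$, prove the exponential monotonicity $\mathcal{I}(\Si_t)\le e^{\frac{n-2}{n-1}t}\mathcal{I}(\Si)$ using Newton--MacLaurin and the Neumann identity $D_\mu H=-H\Pi(N,N)\le 0$ on $\partial C$, and feed this into a capacity test function built from the arrival-time function. The only cosmetic difference is your explicit competitor $\phi=1-e^{-\frac{n-2}{n-1}u}$ versus the paper's P\'olya--Szeg\"o choice $f\circ\varphi$ with $f'=\Lambda/T$ in Proposition \ref{fenchel}; after inserting the monotonicity both give the same constant, so nothing is gained or lost there.

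The gap is exactly the step you flag as "the main technical obstacle," and it is not a small remainder: it is the bulk of the paper's argument (Lemma \ref{tecn} and Proposition \ref{imcf}). In particular, the smooth evolution identity you quote is not available along the weak flow; the paper substitutes a cutoff-function/coarea argument on $\Omega_t$ to get the integrated inequality $\mathcal{I}(\Si_t)\le\mathcal{I}(\Si_\tau)+\frac{n-2}{n-1}\int_\tau^t\mathcal{I}(\Si_s)\,ds$ and then applies Gronwall. Moreover, the place where hypotheses (I) and (II) actually enter is the initial jump $\Si\rightsquigarrow\Si'=\partial_C\{\varphi>0\}$: one needs $\Si_{\tau_i}\to\Si'$ as $\tau_i\searrow0$ (regularity of the minimizing hull, available for $n\le 7$ or when $\Si=\Si'$ is outer-minimizing) together with $H_{\Si'}=0$ on $\Si'\setminus\Si$ and $H_{\Si'}=H_\Si$ a.e.\ on $\Si'\cap\Si$ to conclude $\mathcal{I}(\Si')\le\mathcal{I}(\Si)$; "jumps only decrease $\int H$" is the conclusion of this analysis, not an input. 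Finally, in the rigidity part your assertion that orthogonality plus the vertex lying in $\Omega$ forces the umbilical sphere to be centered at the vertex is not a proof: a hemisphere centered at any boundary point of a half-space meets it orthogonally, so one must genuinely use the convexity of the cone. The paper does this following Ritor\'e--Rosales: the center $\mathbf{c}$ is shown to lie on $\partial C$ via a farthest point of $\partial\Si$, and $\mathbf{c}\neq\vec 0$ is excluded by a contradiction argument showing $\partial\Si$ would bound a flat region in $T_{\mathbf{c}}(\partial C)\cap\partial C$.
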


\begin{remark}
	The above result can be proved under the same hypothesis but with outer-minimizing replaced by star-shaped with
respect to the center of $C$. In which case one can use the same approaching as in Theorem \ref{main4}.  
\end{remark}

 In the theory of convex bodies, Alexandrov-Fenchel inequalities to star-shaped Euclidean domains  with mean convex boundary have recently generated a fair amount of interest, see e.g. \cite{GL} (see also \cite{FS}, for the case of domains with outer-minimizing boundary). In this work, we are also able to prove  estimations of the total mean curvature of a star-shaped hypersurfaces  in convex cones. The precise statement is the following.

\begin{theorem}\label{main4}	
	Let $C\subset \mathbb{R}^{n}$ be a smooth convex cone centered at the origin. Assume that  $\Omega$ is a smooth bounded domain containing the vertex of $C$ such that  $\Sigma=\partial_C\Omega$ meets $\partial C$ orthogonally and is strictly mean convex, i.e., then $H>0$.  If $\Sigma$ is star-shaped  in $C\backslash\Omega,$
	 we have
	\begin{equation}\label{al-fen}
	\frac1{2(n-1)\alpha_{n-1}}\int_{\Sigma}Hd\sigma\ge \frac{1}{2}\left(\frac{\mbox{Area}(\Si)}{\alpha_{n-1}}\right)^{\frac{n-2}{n-1}},
	\end{equation}
	with equality achieved if and only if  $\Omega$ 
	is  the intersection of a ball centered
	at the vertex of cone with $C$.
	% In particular, $\partial\Omega$ is a spherical cap.
\end{theorem}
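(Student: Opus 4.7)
The plan is to run Marquardt's free boundary inverse mean curvature flow with initial surface $\Sigma_0 = \Sigma$, prove monotonicity of an appropriate Willmore-type functional along the flow, and compare its value at $t=0$ with its asymptotic limit on a spherical ball-sector. By the existence theorems of Marquardt, the star-shapedness of $\Sigma$ together with strict mean convexity and the orthogonality with $\partial C$ ensure that the free boundary IMCF in $C$ produces a smooth family $\{\Sigma_t\}_{t \geq 0}$ that exists for all time, with each $\Sigma_t$ remaining smooth, star-shaped, strictly mean convex, and orthogonal to $\partial C$.

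Introduce the scale-invariant functional
\[
Q(t) := |\Sigma_t|^{-\frac{n-2}{n-1}}\int_{\Sigma_t} H\,d\sigma.
\]
The key step is to prove the monotonicity $Q'(t) \leq 0$. Using the standard IMCF evolution relations $\partial_t\,d\sigma = d\sigma$ and $\partial_t H = -\Delta_{\Sigma_t}(1/H) - |A|^2/H$ in $\mathbb{R}^n$, integration by parts gives
\[
\frac{d}{dt}\int_{\Sigma_t} H\,d\sigma = \int_{\Sigma_t}\!\left(H - \frac{|A|^2}{H}\right)d\sigma \;-\; \int_{\partial\Sigma_t} \nabla_\eta \frac{1}{H}\,ds,
\]
where $\eta$ is the outward conormal to $\partial\Sigma_t$ in $\Sigma_t$, which coincides with the outward unit normal $\mu$ of $\partial C$ by orthogonality. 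The preservation of the orthogonality $\langle \nu,\mu\rangle = 0$ along the flow yields, after differentiating in time, the Neumann-type identity $\nabla_\eta (1/H) = \Pi(\nu,\nu)/H$ on $\partial\Sigma_t$, so the boundary integral equals $-\int_{\partial\Sigma_t}\Pi(\nu,\nu)/H\,ds \leq 0$ by the convexity of $C$. Together with the pointwise Cauchy--Schwarz bound $|A|^2 \geq H^2/(n-1)$, this yields
\[
\frac{d}{dt}\int_{\Sigma_t} H\,d\sigma \;\leq\; \frac{n-2}{n-1}\int_{\Sigma_t} H\,d\sigma,
\]
which combined with $|\Sigma_t| = |\Sigma_0|e^t$ is precisely $Q'(t) \leq 0$.

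To conclude, we compute $Q_\infty := \lim_{t \to \infty} Q(t)$. The rescaled hypersurfaces $e^{-t/(n-1)}\Sigma_t$ converge smoothly up to the free boundary to the spherical cap $\partial(C \cap B_R)$ for some $R > 0$ centered at the vertex of $C$, by a Gerhardt--Urbas-type asymptotic analysis of IMCF adapted to the cone setting. A direct calculation on such a ball-sector shows that $Q$ equals $(n-1)\alpha_{n-1}^{1/(n-1)}$ there, and scale invariance of $Q$ together with monotonicity then gives $Q(0) \geq (n-1)\alpha_{n-1}^{1/(n-1)}$, which after dividing by $2(n-1)\alpha_{n-1}$ is exactly the claimed inequality (\ref{al-fen}). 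For the rigidity statement, equality in the monotonicity at any time forces $|A|^2 = H^2/(n-1)$ everywhere on $\Sigma$ (so $\Sigma$ is totally umbilic and hence a piece of a round sphere in $\mathbb{R}^n$) and $\Pi(\nu,\nu) \equiv 0$ on $\partial\Sigma$; orthogonal contact with $\partial C$ then forces the center of this sphere to coincide with the vertex of $C$, so that $\Omega = C \cap B_R(\vec 0)$.

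The main obstacle I anticipate is the rigorous justification of the monotonicity in Marquardt's framework: although the star-shapedness hypothesis provides enough classical regularity, the boundary integral requires a careful derivation of the compatibility identity $\nabla_\eta(1/H) = \Pi(\nu,\nu)/H$ from the orthogonality-preserving nature of the free boundary flow, and the flow's smoothness up to and including $\partial\Sigma_t$ must be invoked. A secondary technical point is the asymptotic identification of the rescaled flow with a ball-sector, which relies on the cone-adapted extension of the Gerhardt--Urbas long-time asymptotic theory.
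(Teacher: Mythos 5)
Your proposal is correct and follows essentially the same route as the paper: it runs Marquardt's free boundary IMCF (which exists smoothly for all time under the star-shapedness hypothesis and converges, after rescaling, to a spherical sector), establishes the monotonicity of the same scale-invariant quantity $Q(t)=h(t)$ via the Neumann identity $D_\mu H=-H\,\Pi(N,N)$, convexity of the cone, and the trace inequality $|A|^2\ge H^2/(n-1)$, and derives rigidity from umbilicity plus $\Pi(N,N)\equiv 0$ exactly as in the paper's argument for Theorem \ref{main3}. No substantive differences.
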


\begin{remark}
	It would be interesting to know if Theorem \ref{main4} can be generalized to $k$-convex starshaped domains in the sense of \cite{GL}.
\end{remark}

Next we give a version of the Poincar\'e--Faber--Szeg\"o  inequality  relating the capacity of a region to its volume (the Euclidean corresponding result can be found in \cite[$\mathsection$ 2]{Sz}).

\begin{theorem} \label{PFS} 
Let $C\subset \mathbb{R}^{n}$ be a smooth convex cone centered at the origin. Assume that  $\Omega$ is a smooth bounded domain containing the vertex of $C$ such that  $\Sigma=\partial_C\Omega$ meets $\partial C$ orthogonally. If $C\backslash\Omega$ is connected, then
	\begin{equation}
	\label{ineqcapl}
	\mbox{Cap}(\Si)\geq \left(\frac{\mbox{Vol}(\Omega)}{\mbox{Vol}(C\cap\{|x|\leq1\})}\right)^{\frac{n-2}{n}},
	\end{equation}
	with equality achieved if and only if  $\Omega$ is  the intersection of a ball centered at the vertex of cone with $C$. 
	%In particular, $\partial\Omega$ is a spherical cap.
\end{theorem}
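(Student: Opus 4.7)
The plan is to adapt the classical Faber–Pólya–Szegő symmetrization argument to the cone setting, using the sharp free-boundary isoperimetric inequality of Lions–Pacella for convex cones as the key geometric input.

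First, I would introduce a cone-rearrangement. Fix an admissible smooth function $\phi:C\setminus\Omega\to[0,1)$ and extend it by zero on $\Omega$. Set $V(t):=|\{\phi\le t\}|$, and fix $r_0$ by $|C\cap B_{r_0}|=|\Omega|$; then define $r(t)$ by $|C\cap B_{r(t)}|=V(t)$. The cone-rearrangement $\phi^*:C\to[0,1)$ is the radial function characterized by $\{\phi^*\le t\}=C\cap B_{r(t)}$. By construction, $\phi^*$ has the same distribution as $\phi$, vanishes on $\Omega^*:=C\cap B_{r_0}$, and tends to $1$ at infinity. The connectedness of $C\setminus\Omega$ enters here to ensure $V$ is continuous and strictly increasing past $V(0)$, so $r(t)$ is well defined.

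Next, I would show $E(\phi^*)\le E(\phi)$. By the coarea formula,
\begin{equation*}
\int_{C\setminus\Omega}|D\phi|^2\,dv=\int_0^1\int_{\{\phi=t\}\cap\mathrm{int}(C)}|D\phi|\,d\mathcal{H}^{n-1}\,dt,
\end{equation*}
and Cauchy–Schwarz applied slice by slice, combined with $V'(t)=\int_{\{\phi=t\}}|D\phi|^{-1}\,d\mathcal{H}^{n-1}$, gives
\begin{equation*}
\int_{\{\phi=t\}\cap\mathrm{int}(C)}|D\phi|\,d\mathcal{H}^{n-1}\ \ge\ \frac{\bigl[\mathcal{H}^{n-1}(\{\phi=t\}\cap\mathrm{int}(C))\bigr]^2}{V'(t)}.
\end{equation*}
The Lions–Pacella isoperimetric inequality in the convex cone then yields $\mathcal{H}^{n-1}(\{\phi=t\}\cap\mathrm{int}(C))\ge\mathcal{H}^{n-1}(C\cap\partial B_{r(t)})$, the right-hand side being the sharp free-boundary perimeter of a subset of $C$ of volume $V(t)$. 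For the radial function $\phi^*$, Cauchy–Schwarz is an equality since $|D\phi^*|$ is constant on each cone-sphere, so integrating in $t$ produces $E(\phi^*)\le E(\phi)$.

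Finally, I would minimize $E(\phi^*)$ over radial $\phi^*$: the one-dimensional problem $\min\int_{r_0}^\infty (f')^2 r^{n-1}\,dr$ subject to $f(r_0)=0$, $f(\infty)=1$ is solved by $f(r)=1-(r_0/r)^{n-2}$, and substitution into $E$ gives precisely $r_0^{n-2}=(|\Omega|/|C\cap B_1|)^{(n-2)/n}$; infimizing over $\phi$ completes the inequality. For the equality case, tracing backwards forces equality in Lions–Pacella for a.e.\ $t$, which by its rigidity (using convexity of $C$) makes each $\{\phi\le t\}$ a ball-sector centered at the vertex; taking $t=0$ gives $\Omega=C\cap B_{r_0}$, and Cauchy–Schwarz equality identifies $\phi$ with the radial extremal. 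The delicate step will be handling how the level sets $\{\phi=t\}$ meet $\partial C$: one must use the precise free-boundary form of Lions–Pacella (and of its rigidity), which is tailored to the setting where only the interior perimeter counts; the orthogonality of $\Sigma$ to $\partial C$ enters to keep the rearrangement admissible near the corner $C\cap\partial B_{r_0}\cap\partial C$.
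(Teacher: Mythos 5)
Your proposal is correct and follows essentially the same argument as the paper: the chain coarea formula $\to$ Cauchy--Schwarz on each level set $\to$ Lions--Pacella isoperimetric inequality for convex cones $\to$ reduction to the one-dimensional radial profile, with rigidity inherited from the equality case of the isoperimetric inequality. The only difference is one of packaging: the paper applies this chain directly to the level sets of the harmonic capacitary potential solving the mixed boundary value problem and then compares with the profile function of the ball-sector, whereas you symmetrize an arbitrary admissible test function and infimize at the end --- a variant the paper itself records in the remark immediately following its proof, citing the Lions--Pacella symmetrization.
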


We say that the Riemannian manifold $M$ is  {\em cornered}  ({\em or curve-faced polyhedral}) {\em manifold} of depth 
$d=2$ if the boundary $\partial M$ of $M$ is decomposed into a union of faces $\Si_1\cup\Si_2,$ such that $\Si_1$ is transversal to $\Si_2$ and the boundary $\partial\Si_i$ of $\Si_i$ is equal to $\Si_1\cap\Si_2$ for $i=1,2$, see \cite{Gr} 
for further discussion of cornered manifolds. Finally,
we should mention that the Riemannian half Schwarzschild space (see Section \ref{Model} for definition) is a non-trivial example of cornered manifold of depth 2.

In order to state our next results more precisely, let us introduce some terminology. A cornered manifold $(M^n,g)$, $n\geq 3$, of depth $2$ is said to be {\it conformally flat} if it is isometric to $( \mathbb{R}^n_+\backslash\Omega, u^ {\frac{4}{n-2}}\delta),$ where  $\Omega\subset  \mathbb{R}^n_+$ is a smooth bounded set such that $\partial\Omega$ intersect $\partial \mathbb{R}^n_+$ orthogonally. Furthermore, assume that $u$ normalized so that $u\to 1$ at $\infty.$ Let $\Si\cup S$ denote the boundary of $M.$ We consider the space $\mathcal{M}$ of all conformally flat metrics $g$ on $M$ such that:

	\begin{itemize}
		\item[i)]   The scalar curvature of $g=u^ {\frac{4}{n-2}}\delta$, denoted by $R_g$, is non-negative. 
			\item[ii)]  $\Sigma=\partial_{\mathbb{R}^n_+} \Omega$  and $S=\partial_{\partial\mathbb{R}^n_+}(\mathbb{R}^n_+\backslash\Omega)$ are mean convex and minimal with respect to the euclidean metric, respectively. 
		\item [iii)] $\Sigma$ is minimal  and $S$  is mean convex with respect to $g$. 
	
	\end{itemize}

The following theorem presents a  mass-capacity inequality and  a volumetric Penrose type  inequality  for conformally flat manifolds. Observe that the result holds in all dimensions.

\begin{theorem} \label{m}
Let $\mathfrak m(g)$ be the mass of $(M,g).$ Assume that  $(M,g)$ is an asymptotically flat manifold with non-compact boundary such that $g\in \mathcal{M}$. 
If  $u_{|_\Sigma}\geq 2$, we have:

\begin{equation}\label{ineq1}
\mathfrak m(g)\ge \mbox{Cap}(\Sigma,g),
\end{equation}
\begin{equation}\label{ineq2}
\mathfrak m(g)\geq 2\left(\frac{\mbox{Vol}(\Omega)}{\mbox{Vol}(\mathbb R_+\cap\{|x|\leq 1\}}\right)^{\frac{n-2}n}.
\end{equation}
Equality holds in (\ref{ineq1}) or (\ref{ineq2}) if and only if $g$ is the Riemannian half Schwarzschild metric. 
	
\end{theorem}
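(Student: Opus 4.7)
The plan is to translate the metric hypotheses into pointwise analytic data on the conformal factor $u$, then compare $u$ with the Euclidean capacity potential of $\Sigma$ via the maximum principle. Since $g = u^{4/(n-2)}\delta$, the Yamabe identity $R_g = -\tfrac{4(n-1)}{n-2} u^{-(n+2)/(n-2)} \Delta u$ converts $R_g \geq 0$ into $\Delta u \leq 0$ in $M$, so $u$ is Euclidean superharmonic. The conformal mean curvature formula $H_g = u^{-n/(n-2)}\bigl(H_\delta + \tfrac{2(n-1)}{n-2} u^{-1} \partial_\mu u\bigr)$, evaluated along $S$ with $H_\delta|_S = 0$ (since $S \subset \partial \mathbb{R}^n_+$ is flat) and $H_g|_S \geq 0$, yields $\partial_\mu u \geq 0$ on $S$, where $\mu$ is the unit normal pointing out of $M$. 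Combined with the standing hypotheses $u|_\Sigma \geq 2$ and $u \to 1$ at infinity, this is the analytic input.

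Let $\phi$ be the Euclidean capacity potential of $\Sigma$ in $M = \mathbb{R}^n_+ \setminus \Omega$, i.e.\ the harmonic function with $\phi|_\Sigma = 0$, Neumann condition $\partial_\mu \phi = 0$ along $S$, and $\phi \to 1$ at infinity. Set $w := u + \phi - 2$. Then $\Delta w = \Delta u \leq 0$, $w|_\Sigma = u - 2 \geq 0$, $w \to 0$ at infinity, and $\partial_\mu w = \partial_\mu u \geq 0$ on $S$. The strong minimum principle for superharmonic functions, combined with the Hopf boundary-point lemma on $S$ (which rules out an interior minimum of $w$ on $S$ with outward derivative of the wrong sign), gives $w \geq 0$ on $M$; that is, $u \geq 2 - \phi$.

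Expanding $u = 1 + A|x|^{-(n-2)} + o(|x|^{-(n-2)})$ and $\phi = 1 - B|x|^{-(n-2)} + o(|x|^{-(n-2)})$ at infinity, with $A$ and $B$ proportional to $\mathfrak m(g)$ and $\mbox{Cap}(\Sigma, g)$ respectively (via the Almaraz--Barbosa--de Lima asymptotic formula for the mass and the normalization in Definition~1 applied to the convex cone $\mathbb{R}^n_+$), the pointwise inequality $u - 1 \geq 1 - \phi$ forces $A \geq B$ upon multiplying by $|x|^{n-2}$ and letting $|x| \to \infty$. This yields (\ref{ineq1}). Inequality (\ref{ineq2}) then follows immediately from (\ref{ineq1}) chained with the Poincar\'e--Faber--Szeg\"o inequality (Theorem~\ref{PFS}) applied to the half-space. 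For the rigidity, equality in (\ref{ineq1}) forces $w \equiv 0$, so $u \equiv 2 - \phi$ is harmonic on $M$ with $\partial_\mu u = 0$ on $S$; reflecting across $\partial \mathbb{R}^n_+$ (permitted because $\Sigma$ meets $\partial \mathbb{R}^n_+$ orthogonally) extends $u$ to a harmonic function on $\mathbb{R}^n$ minus the doubled hypersurface $\Sigma \cup \widetilde\Sigma$, on which $u \equiv 2$, with $u \to 1$ at infinity. Combined with the coupling $H_g|_\Sigma = 0$, which expresses $H_\delta|_\Sigma$ as a dimensional multiple of $\partial_\nu u|_\Sigma$, an Alexandrov/Serrin-type overdetermined argument then forces $\Sigma \cup \widetilde\Sigma$ to be a round sphere centered on $\partial \mathbb{R}^n_+$, and $u$ to be the half-Schwarzschild potential $1 + m/(2|x|^{n-2})$. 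The main obstacle I anticipate is this last rigidity step, since the boundary coupling is not a constancy of $|\nabla u|$ along $\Sigma$ but a linear relation to $H_\delta$ that must be exploited through the symmetry of the reflected harmonic function together with the free boundary constraint.
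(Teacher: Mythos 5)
Your overall strategy (superharmonic comparison of $u$ with the Euclidean potential via the maximum principle, then reading off coefficients at infinity) is genuinely different from the paper's, which instead chains three separate propositions: an IMCF estimate $\mbox{Cap}(\Sigma)\le \frac{1}{(n-1)\alpha_{n-1}}\int_\Sigma H$, an energy comparison $\mbox{Cap}(\Sigma,g)\le \mbox{Cap}(\Sigma)+\mathfrak m(g)/2$ obtained by testing the $g$-Dirichlet energy with $w/u$, and a divergence-theorem identity relating $\mathfrak m(g)$ to $\int_\Sigma Hu$. However, your argument has a genuine gap at the decisive step: the coefficient $B$ in the expansion $\phi=1-B|x|^{2-n}+o(|x|^{2-n})$ of the \emph{Euclidean} potential is the flat capacity $\mbox{Cap}(\Sigma)$, not $\mbox{Cap}(\Sigma,g)$. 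The comparison $u\ge 2-\phi$ therefore yields $\mathfrak m(g)\ge 2\,\mbox{Cap}(\Sigma)$, which is a different (and in general incomparable without further work) statement from \eqref{ineq1}. The half Schwarzschild model makes the discrepancy concrete: there $u=1+\frac{\mathfrak m}{2}|x|^{2-n}$, $\phi=1-\frac{\mathfrak m}{2}|x|^{2-n}$, so $A=B=\mathfrak m/2$, while the paper computes $\mbox{Cap}(\Sigma,g_{\mathfrak m})=\mathfrak m=2B$; if $B$ really were $\mbox{Cap}(\Sigma,g)$, your conclusion $A\ge B$ would fail at the model. To pass from your inequality to \eqref{ineq1} you still need the capacity comparison $\mbox{Cap}(\Sigma,g)\le \mbox{Cap}(\Sigma)+\mathfrak m(g)/2$ (the paper's Proposition \ref{prop}), which is not a bookkeeping step: it requires estimating $\int u^2|D(w/u)|^2$ using $\Delta u\le 0$ and $D_\mu u\ge 0$ and extracting the mass from the boundary term at infinity.

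Two smaller points. First, your derivation of \eqref{ineq2} ``from \eqref{ineq1} chained with Theorem \ref{PFS}'' does not parse as stated, because Theorem \ref{PFS} bounds the flat capacity from below, not $\mbox{Cap}(\Sigma,g)$, and the factor $2$ in \eqref{ineq2} would be lost; but your actual intermediate inequality $\mathfrak m(g)\ge 2\,\mbox{Cap}(\Sigma)$ combined with Theorem \ref{PFS} does give \eqref{ineq2} directly, so this part is salvageable once the statement is corrected. Second, the rigidity step you flag as the main obstacle is indeed not closed in your sketch; the paper avoids the Serrin-type overdetermined analysis entirely by tracing equality back through Proposition \ref{import} (forcing $\Delta_g u=0$, $D_\mu u=0$ on $S$, $u\equiv 2$ on $\Sigma$) and through the equality case of the IMCF estimate in Theorem \ref{main3} (forcing $\Sigma$ to be a hemisphere), after which the metric is identified as half Schwarzschild. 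You would need either to carry out your reflection-plus-overdetermined argument in detail or to import the equality analysis of the IMCF monotonicity.
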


\begin{remark}\label{Rem}
	The  spacetime Penrose inequality is a long-standing conjecture  that has only been proved in a few cases. For instance, the Riemannian  version in dimension three was proved by Huisken and Ilmanen \cite{HI} and by Bray \cite{B}. It gives a relationship between the ADM mass of an end of the manifold  and the area of each outermost minimal sphere bounding the end.  
\end{remark}

\begin{remark}
	Let $\underline{u}$ be the inner radius of a hypersurface $\Si$ i.e. the infimum of the function $u$ on $\Si.$  We point out that, in fact, we can obtain the following inequalities:
	$$\mathfrak m(g)\ge \frac{2\underline u }{2+\underline u }\mbox{Cap}(\Sigma,g) \mbox{ \;and\; } \mathfrak m(g)\geq \underline u\left(\frac{\mbox{Vol}(\Omega)}{\mbox{Vol}(\mathbb R_+\cap\{|x|\leq 1\}}\right)^{\frac{n-2}n}.$$
\end{remark}

\begin{remark}
	More details, examples and importance of cornered domains can be found in the work of Gromov \cite{G}, whose approach allow us to see closed manifolds as  cornered manifolds of depth $0$ and those with nonempty  boundary as cornered manifolds of depth 1. 
\end{remark}

The paper is organized as follows. In Section \ref{simcfb}, 
we give an overview about the IMCF for hypersurfaces with boundary. Then we provide an interplay between  this geometric flow and the total mean curvature whose relationship plays a key role in this work.  Afterwards, in Section \ref{icmtc}, we relate the capacity and  total mean curvature of hypersurfaces with boundary which meet a cone perpendicular.  As a consequence we prove Theorem \ref{main3},  \ref{main4} and \ref{PFS}. The last section  is devoted to  establish some definitions and gather results in order to prove Theorem \ref{m}.

%%%%%%%%%%%%%%%%%%%%%%%%%%%%%%%%%%%%%%%%%%%%%
%%%%%%%%%%%%%%%%%%%%%%%%%%%%%%%%%%%%%%%%%%%%%

\section{ Total mean curvature and the IMCF for hypersurfaces with boundary}\label{simcfb}

%%%%%%%%%%%%%%%%%%%%%%%%%%%%%%%%%%%%%%%%%%%%%
%%%%%%%%%%%%%%%%%%%%%%%%%%%%%%%%%%%%%%%%%%%%%

\noindent 
In this section, we give a brief discussion of the inverse mean curvature flow for hypersurfaces that possesses boundary. Part of the proofs herein relies on modifications of the argument in 
\cite{FS} using the approach developed by Marquadt in \cite{M, M2, M3}.

Let $\Si$ be a compact, smooth, orientable, manifold with compact, smooth boundary $\partial \Sigma.$ Suppose that $X_0: \Sigma\to M$ is a $C^{2,\alpha}$-immersion such that $\Sigma_0:= X_0(\Sigma)$ has strictly positive mean curvature and is perpendicular to a fixed supporting $C^{2,\alpha}$-hypersurface $S$ without boundary in $(M,g),$ satisfying 
$$
X_0(\partial\Sigma)=X_0(\Sigma)\cap S\quad\textrm{and}\quad g(N_0,\mu\circ X_0)=0\;\;\textrm{on}\;\;\Sigma ,
$$ 
where $N_0$ and $\mu$ are the unit normal vector fields on $\Sigma$ and $S$, respectively.
Let $X:\Sigma\times[0,T]\to M$  be a solution of  IMCF for hypersurfaces with boundary

\begin{equation}\label{icmf1}
\begin{cases}
\frac{\partial X}{\partial t}=\frac{1}{H}N\,&\text{in}\: \;\Si\times(0,T)\\
X(\partial\Sigma,t)=\Sigma_t\cap S,\quad g(N,\mu\circ X_0)& \text{on}\;\;\partial\Si\times(0,T)\\
X(\cdot,0)=X_0,  &\text{on}\;\;\Si,
\end{cases}
\end{equation}
where  $H$, assumed to be positive, is the mean curvature of $\Si$ in $M$  with respect to $N$ and $\Si_t=X(\Si,t).$ Since $\Si$ is orthogonal to $S$, the outward unit co-normal $\nu$ of $\partial \Si$ coincides with $\mu$ along $\partial \Si$.

In particular when $M$ is a convex cone and $\Si$ is star-shaped with respect to the center of the cone, we have the following analogous statement to the one of Gerhardt \cite{G} for closed hypersurfaces.

\begin{theorem}[Marquadt \cite{M2}]\label{marqcone}
	Let $C\subset \mathbb{R}^{n}$ be a smooth convex cone centered at the origin. Let $X_0:\Si\to\mathbb{R}^{n}$ such that $\Si_0:=X_0(\Si)$ is a compact $C^{2,\alpha}$-hypersurface which is star-shaped with respect to the center of the cone and has strictly positive mean curvature. Furthermore, assume that $\Si_0$ meets  $\partial C$  orthogonally. Then there exists a unique embedding
	$$X\in C^{2+\alpha,1+\frac{\alpha}{2}}(\Si\times[0,+\infty);\mathbb{R}^{n})\cap C^{\infty}(\Si\times(0,+\infty);\mathbb{R}^{n})$$ 
	with $X(\partial\Si,t)\subset \partial C$ for all $t\geq 0$ satisfying (\ref{icmf1}). Furthermore, the rescaled embedding  converges smoothly to an embedding $X_{\infty}$,  mapping $\Si$ into a piece of a round sphere of radius $r_{\infty} =\Big(\frac{\mbox{Area}(\Si_0)}{\mbox{Area}(\Si)}\Big)^{1/(n-1)}$.
\end{theorem}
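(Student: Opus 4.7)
The plan is to follow Gerhardt's approach for star-shaped IMCF in the closed case, suitably adapted to handle the orthogonal free-boundary condition on $\partial C$ by reducing the flow to a Neumann-type boundary problem on the spherical cross-section $U = C\cap\mathbb{S}^{n-1}$. First I would use the star-shapedness of $\Si_0$ to parametrize it as a radial graph $\Si_0 = \{r_0(\xi)\xi : \xi\in U\}$. Setting $\varphi := \log r$, the equation $\partial_t X = N/H$ becomes a scalar quasilinear parabolic equation of the form
\begin{equation*}
\partial_t \varphi \;=\; \frac{v}{r\, H(\varphi, D\varphi, D^2\varphi)},
\end{equation*}
with $v = \sqrt{1+|D\varphi|^2}$ and $D$ the covariant derivative on $\mathbb{S}^{n-1}$. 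Because $\partial C$ is a union of radial rays emanating from $\vec 0$, its outward unit normal $\mu$ is tangent to $\mathbb{S}^{n-1}$ along $\partial U$, and hence orthogonality of $\Si_t$ to $\partial C$ translates exactly into the Neumann condition $\partial_\nu \varphi = 0$ on $\partial U \times [0,T)$, where $\nu$ is the outward conormal to $\partial U$ in $\mathbb{S}^{n-1}$.

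Short-time existence and uniqueness in the class $C^{2+\alpha,1+\alpha/2}$ will then follow from the standard Schauder theory for quasilinear parabolic problems with oblique boundary data. The heart of the proof is the derivation of a priori estimates that are uniform up to the free boundary and permit continuation for all time. I would aim for (i) $C^0$ bounds on $r$ obtained by comparison with the explicit radial (spherical cap) solutions, trapping $r$ between two exponentials in $t/(n-1)$; (ii) a gradient estimate via the maximum principle applied to an angle function of the type $|D\varphi|^2/2$, where the Neumann condition combined with the convexity hypothesis $\Pi\geq 0$ on $\partial C$ produces the correct sign of the Hopf boundary term at $\partial U$ and keeps $\Si_t$ star-shaped; and (iii) a $C^2$ estimate by maximum principles for $H$ and $1/H$ on $\Si_t$, again exploiting $\Pi\geq 0$ to control boundary terms. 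Bootstrapping through Schauder estimates for oblique problems then gives uniform $C^{k,\alpha}$ control up to $\partial U$, and long-time existence follows by the usual continuation argument.

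For the asymptotic behaviour I would rescale by $\tilde X(\xi,t) := e^{-t/(n-1)} X(\xi,t)$, which keeps the area of $\tilde \Si_t$ constant equal to $\mbox{Area}(\Si_0)$ since IMCF grows area by the factor $e^t$. The rescaled radial function $\tilde\varphi = \varphi - t/(n-1)$ satisfies a modified parabolic equation whose only stationary solutions compatible with the Neumann condition on $\partial U$ are the constants, corresponding to spherical sectors $r_\infty\cdot U$. Combining the uniform estimates above with the standard oscillation-decay arguments in the Gerhardt framework (equivalently, monotonicity of a scale-invariant energy along the rescaled flow) yields smooth convergence of $\tilde X$ to an embedding $X_\infty$ mapping $\Si$ into $r_\infty\cdot U$. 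Area conservation of the rescaled flow then pins down $r_\infty = \bigl(\mbox{Area}(\Si_0)/\mbox{Area}(\Si)\bigr)^{1/(n-1)}$, where the denominator denotes the solid-angle area of the cross-section $U$.

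The hard part will be the boundary analysis in the gradient and curvature estimates: the oblique/Neumann condition interacts nontrivially with the second fundamental form of $\partial C$, and it is precisely the convexity assumption $\Pi\geq 0$ that guarantees the boundary terms in all the relevant maximum-principle arguments carry the favorable sign. Without that sign, one could not exclude loss of star-shapedness or the formation of corners where $\Si_t$ meets $\partial C$, which would destroy both the graphical formulation and the convergence argument.
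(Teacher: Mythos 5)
The paper does not actually prove this theorem --- it is quoted as a known result of Marquardt \cite{M2} --- and your outline reconstructs precisely the strategy of that source: Gerhardt's star-shaped graphical reduction to a scalar quasilinear parabolic equation on $U=C\cap\mathbb{S}^{n-1}$, with orthogonality to $\partial C$ becoming the homogeneous Neumann condition $\partial_\nu\varphi=0$ on $\partial U$, convexity of the cone supplying the favorable sign in the boundary gradient and curvature estimates, and rescaled convergence to a spherical cap whose radius $r_\infty$ is pinned down by the exponential area growth. The approach is the right one; what you give is of course a roadmap rather than a complete argument, with the substantive work residing exactly in the boundary a priori estimates you correctly flag as the hard part.
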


Now, we return to the general context. 
Let $\varphi:M\to\mathbb{R}^n$ be a function such that $\Si_t=\partial_M\Omega_t$ where $\Omega_t=\{\varphi<t\}.$ As long as the  mean curvature of $\Sigma_t$ is strictly positive, the parabolic formulation
(\ref{icmf1}) is equivalent to  
\begin{equation}\label{imcf2}
\begin{cases}
\textrm{div}\Big(\frac{D\varphi}{|D\varphi|}\Big)=|D\varphi|&\text{in}\: \;M_0:=M\backslash\overline{\Omega_0}\\
D_\mu \varphi=0& \text{on}\;\;\partial M_0:=\partial_{S}M_0\\
\varphi=0& \text{on}\;\;\partial_{M}\Omega_0.
\end{cases}
\end{equation}

The hypersurface flows in the outward normal direction with speed $N=\frac{1}{H}$ and it is easy to see that there is a problem if either $H=0$ or  $H$ changes the sign on $\Si_t.$\footnote{For instance, for non-star-shaped initial hypersurfaces, singularities may occur in finite time} To overcome theses problems, Marquadt \cite{M, M3} developed the notion of
weak solutions for (\ref{imcf2}), proving the existence and uniqueness of such solutions guided by the ideas of Huisken and Ilmanen \cite{HI}.

 Consider a foliation $\{\Si_t\} $ defined by the level sets of the function given by weak solution of IMCF in $M\backslash \overline \Omega_0$.  By Lemma 5.1 and 5.3 of \cite{M3}, each $\Si_t:=\partial_M \{\varphi<t\}$, $\varphi\in C^{0,1}_{loc}(M)$, is a $C^{1,\frac{1}{2}}$-hypersurface  up to a set of dimension less than or equal to $n-8$ which possesses a weak mean curvature in $L^{\infty}$ given by
$$
\overrightarrow H(x)=|D\varphi(x)|N(x),\quad\textrm{where}\quad N(x):=\frac{D\varphi(x)}{|D\varphi(x)|}
$$
for almost every $t>0,$ $x\in \Sigma_t$. Although the result originally stated in Lemma 5.1 of \cite{M3} does not includes any information about the regularity of the boundary, it can be extended up to the boundary applying Proposition 3.2 of \cite{V} and the Gr\"uter-Jost's free boundary regularity \cite{GJ}. 	Hence for those above values of $t$, $\Si_t$ is orthogonal to $S$ in the classical sense and any neighborhood of points $x\in S\cap \partial_M^{*} \Omega_t $\footnote{$\partial^*$ represents the reduced boundary in the sense of the set of locally finite perimeter in $\mathbb R^n$.}

\begin{definition}\label{fouter}
	Let $\Sigma\subset M$ be a hypersurface that meets $S$ orthogonally. $\Si$ is called {\it free boundary outer-minimizing} if any other hypersurface $\hat\Si$ that is transversal to $S$  and encloses $\Si$ has $$\mbox{Area}(\Si)\leq \mbox{Area}(\hat\Si).$$ We also say that $\Si$ is free boundary strictly outer-minimizing if every hypersurface which encloses it and is transversal to $S$ has strictly greater area.
	\end{definition}

Note that a free boundary outer-minimizing has $H\geq0$, since otherwise there would exist an  outward variation which would decrease its area. The following lemma gives the connection between  outer-minimizing property and parabolic problem with Neumann boundary condition (\ref{imcf2}).

\begin{lemma}
	If $\varphi$ is a solution of the equation (\ref{imcf2}), then $\Si_t$ is free boundary outer-minimizing  for all $t>0$. 
\end{lemma}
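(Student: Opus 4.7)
The plan is to invoke the variational characterization of weak solutions to the free boundary inverse mean curvature flow, which is the boundary-adapted analogue of the central fact from Huisken--Ilmanen \cite{HI}: the sublevel sets $E_t = \{\varphi < t\}$ of a weak solution $\varphi$ of \eqref{imcf2} minimize the functional
\begin{equation*}
J_{\varphi}^{K}(F) \;=\; |\partial_{M}^{*}F \cap K| \;-\; \int_{F \cap K} |D\varphi|\, dv
\end{equation*}
among all sets of locally finite perimeter $F$ with $F \Delta E_t \subset\subset K$, where $K \subset M \setminus \overline{\Omega_0}$ is compact and $\partial_M^*F$ denotes the reduced boundary of $F$ in $M$ (i.e.\ the part not lying on $S$). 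This minimization property is exactly what Marquardt uses in \cite{M,M3} to define weak solutions to \eqref{imcf2}, with the Neumann condition along $S$ built into the notion by not charging the portion of $\partial^*F$ lying on $S$.

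Given $t > 0$, I would argue as follows. Let $\hat\Sigma$ be any hypersurface transversal to $S$ that encloses $\Sigma_t$, and let $F$ denote the region in $M \setminus \overline{\Omega_0}$ bounded by $\hat\Sigma$, $S$, and $\Sigma_0$, so that $F \supset E_t$. Choose a compact $K \subset M \setminus \overline{\Omega_0}$ containing $F \setminus E_t$. Then $F \Delta E_t \subset\subset K$, and by the minimization property of $E_t$,
\begin{equation*}
\mathrm{Area}(\Sigma_t) \;-\; \int_{E_t \cap K} |D\varphi|\, dv \;\le\; \mathrm{Area}(\hat\Sigma) \;-\; \int_{F \cap K} |D\varphi|\, dv.
\end{equation*}
Since $E_t \subset F$, rearranging gives
\begin{equation*}
\mathrm{Area}(\Sigma_t) \;\le\; \mathrm{Area}(\hat\Sigma) \;-\; \int_{F \setminus E_t} |D\varphi|\, dv \;\le\; \mathrm{Area}(\hat\Sigma),
\end{equation*}
which is precisely the free boundary outer-minimizing property of $\Sigma_t$ as in Definition \ref{fouter}.

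The main technical obstacle is verifying that the variational characterization really applies to competitors $\hat\Sigma$ that are only transversal to $S$ (not necessarily orthogonal) and that the reduced-boundary perimeter of $F$ in $M$ coincides with $\mathrm{Area}(\hat\Sigma)$ for smooth $\hat\Sigma$, without picking up a contribution from $\partial F \cap S$. This requires appealing to the boundary regularity results cited in the text, namely Proposition 3.2 of \cite{V} and the Gr\"uter--Jost free boundary regularity theorem \cite{GJ}, to justify that $\Sigma_t$ meets $S$ orthogonally so that the perimeter on $S$ genuinely drops out of $J_\varphi^K$. Once this is in place the estimate above is immediate; the case of arbitrary finite-perimeter competitors then follows by the standard approximation that Marquardt sets up in \cite{M3}.
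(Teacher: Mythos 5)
Your proposal is correct, but it takes a different route from the paper. You pass through the variational characterization of weak solutions: you invoke the fact that the sublevel sets $E_t=\{\varphi<t\}$ minimize the functional $J_\varphi^K$ (with the perimeter not charging the portion of the boundary on $S$), and then compare $E_t$ with the larger competitor $F$ to conclude $\mathrm{Area}(\Sigma_t)\le\mathrm{Area}(\hat\Sigma)$. The paper instead gives a direct, self-contained calibration argument: it applies the divergence theorem to the unit vector field $\tfrac{D\varphi}{|D\varphi|}$ on the region $U$ between $\Sigma_t$ and the competitor $\hat\Sigma$; the interior term equals $\int_U|D\varphi|\,dv\ge 0$ by the level-set equation, the term on $\Sigma_t$ is exactly $-\mathrm{Area}(\Sigma_t)$ since the field is the unit normal there, the term on $\hat\Sigma$ is at most $\mathrm{Area}(\hat\Sigma)$ by Cauchy--Schwarz, and the term on $S$ vanishes by the Neumann condition $D_\mu\varphi=0$. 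Your approach buys generality --- it is phrased so as to survive the passage to weak solutions with jumps and nonsmooth competitors, which is ultimately what the later sections need --- but it outsources the essential content: the assertion that a classical solution of \eqref{imcf2} satisfies the $J_\varphi^K$-minimization property is itself proved (in Huisken--Ilmanen and in Marquardt's free boundary adaptation) by precisely the divergence-theorem computation the paper carries out, so your citation conceals rather than replaces the key step. Your closing paragraph correctly identifies the genuine technical point in the weak setting (that competitors need only be transversal to $S$ and that the perimeter on $S$ drops out), which the paper's smooth argument handles automatically via the Neumann boundary term.
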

\begin{proof}
	Assume that $\Si$ is any hypersurface enclosing $\Si_t$ and let $U$ be the region between $\Si$ and $\Si_t$. Integrating by parts gives
	$$
	\int_{U}\mbox{div}\Big(\frac{D\varphi}{|D\varphi|}\Big)dv=\int_{\Si}\Big\langle N,\frac{D\varphi}{|D\varphi|}\Big\rangle d\sigma-\mbox{Area}(\Si_t)+\int_{\partial _{S}U}\frac{1}{|D\varphi|}D_{\mu}\varphi dl.
	$$
	Since the left hand side is  equal to $\int_{U}|D\varphi|dv\geq0$ and $D_{\mu}\varphi=0$ on $\partial _{S}U$, we get  $$\mbox{Area}(\Si_t)\leq \mbox{Area}(\Si)$$
	for all $t>0$.
\end{proof}

\begin{remark}
Note that $\varphi\equiv t$ on $U,$ when $\mbox{Area}(\Si)=\mbox{Area}(\Si_t)$.
\end{remark}

\begin{remark}
In the two-dimensional case, the existence of free boundary outer minimizing sets follows from the Plateau's problem with partially free boundary, see for example \cite{Cr}.

\end{remark}

In the next step we calculate the evolution of the total mean curvature under the flow (\ref{icmf1}). Before
proceeding, we need the following technical lemma.

\begin{lemma}\label{tecn}
	Let $C\subset \mathbb{R}^{n}$ be a smooth cone. Let $\Omega\subset C$ be a smooth bounded domain so that $\Si=\partial_C\Omega$ meets $\partial C$ orthogonally.  Consider a foliation $\{\Si_t\}_{t\geq0}$  given by  weak solution of IMCF  in $C\backslash\Omega$, where $\Si_t=\partial_C\{\varphi\leq t\}.$
	
	If $\Phi\in C^{0,1}_ {c}((0,t),\mathbb{R}_+^n)$, we have
	$$-\int_{\Omega_t}\langle D \phi, N\rangle Hdv 
	\leq \frac{n-2}{n-1}\int_{\Omega_t}\phi H^2dv -\int_{\partial_{\partial C}\Omega_t}\frac{\phi}{H}D_\mu Hdl$$
	where $\Omega_t=\{\varphi\leq t\}$ and $\phi=\Phi\circ \varphi:\Omega_t\rightarrow \mathbb{R}$.
\end{lemma}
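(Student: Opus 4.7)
My plan is to reduce the lemma to a pointwise (in the IMCF time parameter $s$) inequality and then apply it back via the co-area formula.

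\emph{Step 1 (Co-area reformulation).} On each level set $\Sigma_s = \{\varphi = s\}$ of the weak IMCF one has $|D\varphi| = H$ and $N = D\varphi/|D\varphi|$, so $\langle D\phi, N\rangle = \Phi'(\varphi)\, H$. Setting $M(s) := \int_{\Sigma_s} H\, d\sigma$, the co-area formula gives
\begin{align*}
\int_{\Omega_t}\phi H^2\, dv &= \int_0^t \Phi(s)\, M(s)\, ds,\\
\int_{\Omega_t}\langle D\phi, N\rangle H\, dv &= \int_0^t \Phi'(s)\, M(s)\, ds,
\end{align*}
while the Neumann condition $D_\mu\varphi = 0$ implies $|\nabla_{\partial C}\varphi| = |D\varphi| = H$ on $\partial C$, so a further application of co-area on $\partial C$ yields
$$\int_{\partial_{\partial C}\Omega_t}\frac{\phi}{H}\, D_\mu H\, dl = \int_0^t \Phi(s) \int_{\partial\Sigma_s}\frac{D_\mu H}{H^2}\, dl\, ds.$$
Integrating by parts in $s$ (using $\mathrm{supp}\,\Phi \Subset (0,t)$) converts the left-hand side of the lemma into $\int_0^t \Phi(s)\, M'(s)\, ds$, and since $\Phi \geq 0$ is otherwise arbitrary, the lemma reduces to a pointwise bound on $M'(s)$ by $\frac{n-2}{n-1}M(s)$ plus a signed boundary integral over $\partial\Sigma_s$.

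\emph{Step 2 (Evolution of total mean curvature).} For a classical smooth flow I would use the standard evolution formulas in flat ambient space, $\partial_s\, d\sigma = d\sigma$ (since $HV=1$) and $\partial_s H = -\Delta_{\Sigma_s}(1/H) - |A|^2/H$, to obtain
$$M'(s) = \int_{\Sigma_s}\Bigl(H - \tfrac{|A|^2}{H}\Bigr)\, d\sigma - \int_{\Sigma_s}\Delta_{\Sigma_s}(1/H)\, d\sigma.$$
The divergence theorem on $\Sigma_s$, together with the orthogonality identification $\nu = \mu$ at $\partial \Sigma_s$, rewrites the Laplacian term as a boundary integral over $\partial \Sigma_s$ involving $D_\mu H/H^2$, while Newton's inequality $|A|^2 \geq H^2/(n-1)$ produces the factor $(n-2)/(n-1)$ in the interior term. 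The precise sign of the boundary contribution is obtained by tangentially differentiating the Neumann condition $D_\mu\varphi = 0$ along $\partial C$ and using $D\varphi = HN$: this yields the identity $D_\mu H = -H\,\Pi_{\partial C}(N,N)$ at $\partial \Sigma_s$, which makes the bookkeeping of the boundary term transparent and yields the lemma.

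\emph{Step 3 (Weak regularity).} The main technical obstacle is that the weak level-set solutions produced by Marquadt are only $C^{1,1/2}$ off a singular set of dimension $\leq n-8$, with weak mean curvature only in $L^\infty$, so the classical evolution identities of Step 2 are not directly available. To get around this, I would work with the elliptic regularization
$$\mathrm{div}\Bigl(\frac{D\varphi^\varepsilon}{\sqrt{|D\varphi^\varepsilon|^2+\varepsilon^2}}\Bigr) = \sqrt{|D\varphi^\varepsilon|^2+\varepsilon^2}$$
introduced by Huisken--Ilmanen \cite{HI} and extended to the Neumann free-boundary case by Marquadt \cite{M2, M3}, derive the analogous identity for the smooth approximating solutions $\varphi^\varepsilon$, and pass to the limit using Marquadt's uniform estimates. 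Controlling the co-area factor $1/|D\varphi^\varepsilon|$ near the small singular set and checking convergence of both the interior and boundary integrals in this limit is the bulk of the technical work.
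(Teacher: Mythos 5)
Your Steps 1--2 correctly identify the geometric content of the lemma: the level-set identities $H=|D\varphi|$ and $N=D\varphi/|D\varphi|$, the co-area reductions of all three integrals, the evolution formula for the total mean curvature in which the boundary term $\int_{\partial\Sigma_s}H^{-2}D_\mu H\,dl$ arises from $\int_{\Sigma_s}\Delta_{\Sigma_s}(1/H)\,d\sigma$ via the divergence theorem and the identification $\nu=\mu$, the Newton--MacLaurin inequality $|A|^2\ge H^2/(n-1)$, and the Neumann identity $D_\mu H=-H\,\Pi(N,N)$ (which, note, is not actually needed for the lemma as stated: the boundary term is carried along with its sign and the convexity of the cone is only invoked later, in Proposition \ref{imcf}). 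These are exactly the ingredients the paper uses.

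The gap is in how you pass from the smooth picture to the weak one, and it sits at the crucial point. Your Step 1 reduces the lemma to a pointwise bound on $M'(s)$ by integrating by parts in $s$; but for the weak flow the function $s\mapsto M(s)=\int_{\Sigma_s}H\,d\sigma$ is not known to be absolutely continuous --- it can jump at the jump times of the weak solution --- so the identity $-\int_0^t\Phi'M\,ds=\int_0^t\Phi M'\,ds$ is unavailable, and the entire reason the lemma is stated in integrated, cutoff form is to avoid ever differentiating $M$ in time. The paper's proof never touches $M'(s)$: it applies the static level-set identity of Freire--Schwartz (Lemma A.1 of \cite{FS}), valid on a.e.\ level set of the weak solution (each $\Sigma_s$ being $C^{1,1/2}$ off a set of dimension at most $n-8$, with weak mean curvature $H=|D\varphi|\in L^\infty$), converts $\int_{\Omega_t}\phi\langle N,DH\rangle\,dv$ by co-area into the $|A|^2$ term plus the $\partial\Sigma_s$ boundary term, and combines this with the purely spatial divergence identity $\mathrm{div}(\phi HN)=\Phi'(\varphi)H^2+\phi\langle N,DH\rangle+\phi H^2$ integrated over $\Omega_t$, whose boundary contributions vanish because $\Phi$ is compactly supported in $(0,t)$ and $\langle N,\mu\rangle=0$ on $\partial C$. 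Your Step 3 (elliptic regularization \`a la Huisken--Ilmanen, adapted to the Neumann free boundary) is a legitimate alternative route, but as written it is a program rather than a proof: you would still need to establish the inequality for the approximators, prove convergence (or the correct one-sided semicontinuity) of $\int\phi|A|^2$, of the free-boundary integrals, and of the total mean curvatures in the limit $\varepsilon\to 0$, and what survives that limit is the integrated cutoff inequality --- not the pointwise bound on $M'(s)$ that your Step 1 requires. As it stands the argument assumes at the outset the regularity whose absence is the only difficulty.
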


\begin{proof}
	
	First, one can argue as Lemma A.1 in \cite{FS} (even supposing  $\varphi$ as a function of class $C^3$) to obtain the following integral expression:
	$$
	\int_{\Sigma_t}\frac 1{|D \varphi|}\langle N, D H\rangle d\sigma
	=\int_{\Sigma_t} \Delta_{\Sigma_t}(-|D \varphi|^{-1})d\sigma-\int_{\Sigma_t}\frac 1{|D \varphi|}|A|^2d\sigma,
	$$
	where $|A|$ is the square sum of the principal curvature of $\Sigma_t$. In the following the co-area formula, the divergence theorem and the fact that $H = |D\varphi|$ a.e. yield

	\begin{align}\label{help}
	\int_{\Omega_t}\phi \langle N, D H\rangle dv& =\int_0^t\Phi(s)\Big(\int_{\Sigma_s}\nonumber
	\frac {\langle N, D H\rangle}{|D \varphi|} d\sigma_s\Big)ds\\ \nonumber
& 	+\int_0^t\Phi(s)\Big(\int_{\partial\Sigma_s}\frac {\langle N, D H\rangle }{|D \varphi|^2}D_\mu\varphi\; dl_s\Big)ds\\ 
	& =-\int_0^t\Phi(s)\int_{\partial\Si_s}\frac{D_\mu H}{H^2}dl_sds -\int_{\Omega_t}\phi |A|^2dv.
		\end{align}

On the other hand, 
	$$\mbox{div }(\phi H N)=(\Phi'\circ \varphi)H^2 +\phi \langle N, D H\rangle+\phi H^2.$$
	 So, integrating over $\Omega_t$ and combining (\ref{help}) give
	
	$$-\int_{\Omega_t}\langle D\phi, N\rangle H dv \leq\int_{\Omega_t}\phi(H^2-|A|^2)dv-\int_{\partial_{\partial C}\Omega_t}\phi\frac{D_\mu H}{H}dl_t.$$
	
	Denoting by $(\kappa_1,...,\kappa_{n-1})$ the principal
	curvature vector of $\Si_t,$ we have by the Newton-MacLaurin's inequality for the expression  $2\sum_{i<j}\kappa_i\kappa_j $ that 
	
	$$
	H^2-|A|^2\leq\frac{n-2}{n-1}H^2,
	$$
	with the equality holding only if $\Si_t$ is umbilical. 
	
	In view of the above, 
	
	$$
	-\int_{\Omega_t}\langle D\phi, N\rangle H dv\leq \frac{n-2}{n-1}\int_{\Omega_t}\phi H^2dv-\int_{\partial_{\partial C}\Omega_t}\frac{\phi}{H}D_\mu Hdl_t.
	$$
\end{proof}

We define the following quantity 
\begin{equation}\label{quant}
\mathcal{I}(\Si)=\int_{\Sigma}H d\sigma.
\end{equation}
Consider the foliation $\{\Sigma_t\}_{t\geq 0}$ defined by the level sets of the function given by weak solution of IMCF  for hypersurfaces with boundary in $C\backslash\Omega$. We prove the following proposition about the  functional $\mathcal{I}$.

\begin{proposition} \label{imcf} We have
	\begin{equation}\label{evol1}
	\mathcal{I}(\Si_t)\leq \mathcal{I}(\Si)\exp\Big[\Big(\frac{n-2}{n-1}\Big)\cdot t\Big],\;\; t\geq 0.
	\end{equation}	 \end{proposition}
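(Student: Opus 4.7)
My plan is to test Lemma \ref{tecn} against functions of the form $\phi=\Phi\circ\varphi$, translate both sides into integrals in time via the coarea formula, eliminate the boundary term using convexity of the cone, and then pass from the resulting distributional differential inequality to the pointwise exponential bound via a Gronwall-type argument.

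For the bulk contributions, since $N=D\varphi/|D\varphi|$ and $H=|D\varphi|$ almost everywhere, we have $D\phi=\Phi'(\varphi)D\varphi$, hence $\langle D\phi,N\rangle H=\Phi'(\varphi)H^2$. Applying the coarea formula to $\varphi$ on $\Omega_t\setminus\overline{\Omega_0}$ therefore gives, for every $\Phi\in C^{0,1}_c((0,t),\R_+)$,
$$
-\int_{\Omega_t}\langle D\phi,N\rangle H\,dv \;=\; -\int_0^t \Phi'(s)\,\mathcal{I}(\Si_s)\,ds, \qquad \int_{\Omega_t}\phi H^2\,dv \;=\; \int_0^t \Phi(s)\,\mathcal{I}(\Si_s)\,ds.
$$

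The main obstacle is extracting a useful sign from the boundary term $-\int_{\partial_{\partial C}\Omega_t}\tfrac{\phi}{H}D_\mu H\,dl$. Here I would exploit the Neumann condition $D_\mu\varphi\equiv 0$ from (\ref{imcf2}), the fact that orthogonality $\Si_t\perp\partial C$ makes $N$ tangent to $\partial C$, the symmetry of the Euclidean Hessian of $\varphi$, and the convexity $\Pi\ge 0$ of $\partial C$. Differentiating the identity $\langle\mu,D\varphi\rangle\equiv 0$ along $N$, using Hessian symmetry to swap the order of differentiation, and invoking the Weingarten relation $\langle D_N\mu,D\varphi\rangle=-\Pi(N,D\varphi)$ yields
$$
D_\mu H \;=\; \langle N,D_\mu D\varphi\rangle \;=\; \langle\mu,D_N D\varphi\rangle \;=\; \Pi(N,D\varphi) \;=\; H\,\Pi(N,N)\;\ge\; 0,
$$
so that the boundary contribution is nonpositive whenever $\phi\ge 0$ and may be discarded.

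Combining these ingredients, Lemma \ref{tecn} collapses to the distributional inequality
$$
-\int_0^t\Phi'(s)\,\mathcal{I}(\Si_s)\,ds \;\le\; \frac{n-2}{n-1}\int_0^t\Phi(s)\,\mathcal{I}(\Si_s)\,ds,
$$
which says precisely that $s\mapsto e^{-\frac{n-2}{n-1}s}\mathcal{I}(\Si_s)$ is non-increasing in the sense of distributions on $(0,t)$. Choosing $\Phi$'s that approximate the indicator of $[0,t]$, and using the monotonicity of $\mathcal{I}$ across the at most countable set of jump times of the weak flow---at which the passage to the minimizing hull can only lower the total mean curvature, since the newly added minimal pieces contribute zero---one concludes $\mathcal{I}(\Si_t)\le \mathcal{I}(\Si_0)\exp\!\big(\tfrac{n-2}{n-1}\,t\big)$ for all $t\ge 0$, as claimed.
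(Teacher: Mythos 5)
Your overall strategy coincides with the paper's: test Lemma \ref{tecn} against trapezoidal cutoffs $\phi=\Phi\circ\varphi$, convert everything to integrals in $t$ via the coarea formula and $H=|D\varphi|$, kill the boundary term on $\partial C$ using convexity, and conclude by Gronwall. However, your key sign computation is wrong. Differentiating $\langle\mu,D\varphi\rangle\equiv 0$ along the tangent direction $N$ gives $0=\langle D_N\mu,D\varphi\rangle+\langle\mu,D_ND\varphi\rangle=H\,\Pi(N,N)+D_\mu H$ once one uses the convention forced by the paper's definition of convexity, namely $\Pi(X,Y)=\langle D_X\mu,Y\rangle\ge 0$ for the outward normal $\mu$. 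Hence $D_\mu H=-H\,\Pi(N,N)\le 0$ --- this is exactly Marquardt's Neumann condition (3.6) quoted in the paper --- and not $+H\,\Pi(N,N)\ge 0$ as you assert. Your Weingarten identity $\langle D_N\mu,X\rangle=-\Pi(N,X)$ is the opposite convention, under which a convex cone has $\Pi\le 0$, so you cannot simultaneously use it and the hypothesis $\Pi\ge 0$. The conclusion you want (that the $\partial C$ contribution to the evolution of $\mathcal I$ is nonpositive) is true, but it comes from the fact that the divergence-theorem computation actually produces the term $+\int_{\partial_{\partial C}\Omega_t}\frac{\phi}{H}D_\mu H\,dl$ (compare the smooth evolution $\frac{d}{dt}\mathcal I(\Sigma_t)=\int_{\Sigma_t}(H-|A|^2/H)\,d\sigma_t+\int_{\partial\Sigma_t}H^{-2}D_\mu H\,dl_t$ used in (\ref{inqi})), which is $\le 0$ precisely because $D_\mu H\le 0$. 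With the correct sign of $D_\mu H$, the boundary term as literally written in Lemma \ref{tecn} would be nonnegative and could not be discarded; your argument only closes because your sign error compensates for that.

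The second, more substantive gap is the passage from the distributional inequality on $(0,t)$ to the stated bound with initial datum $\mathcal I(\Sigma)$. Your test functions must be compactly supported in $(0,t)$, so what you actually obtain is $\mathcal I(\Sigma_t)\le\mathcal I(\Sigma_\tau)+\frac{n-2}{n-1}\int_\tau^t\mathcal I(\Sigma_s)\,ds$ for a.e. $0<\tau<t$; ``approximating the indicator of $[0,t]$'' is not admissible and the inequality never sees $\Sigma_0$. One must prove $\limsup_{\tau\searrow 0}\mathcal I(\Sigma_\tau)\le\mathcal I(\Sigma)$. The paper does this by showing $\Sigma_{\tau_i}\to\Sigma'=\partial_C\{\varphi>0\}$ (the strictly minimizing hull) in $C^{1,\beta}_{\mathrm{loc}}$ --- this is where the hypotheses $n\le 7$, or $\Sigma$ free boundary outer-minimizing so that $\Sigma'=\Sigma$, actually enter --- together with convergence of the weak mean curvatures (via the first variation formula and Riesz representation) to get $\mathcal I(\Sigma_{\tau_i})\to\mathcal I(\Sigma')$, and finally the regularity results giving $H_{\Sigma'}=0$ on $\Sigma'\setminus\Sigma$ and $H_{\Sigma'}=H_\Sigma$ a.e.\ on $\Sigma'\cap\Sigma$, whence $\mathcal I(\Sigma')\le\mathcal I(\Sigma)$. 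Your clause about ``newly added minimal pieces contributing zero'' is the right heuristic for this last inequality, but the convergence $\mathcal I(\Sigma_{\tau_i})\to\mathcal I(\Sigma')$ is the technical heart of the proof and is absent from your argument.
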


\begin{proof}
	Using equality (3.3) of \cite{M}, we compute:
	\begin{equation*}
	\frac{d}{dt}\mathcal{I}(\Si_t)=\int_{\Si_t}\Big(H-\frac{|A|^2}{H}\Big)d\sigma_t+ \int_{\partial\Si_t}\frac{1}{H^2}D_{\mu}Hdl_t.
	\end{equation*}

Using once more the Newton-MacLaurin's inequality and the Neumann condition $D_{\mu}H=-H\Pi(N,N)$ derived in (3.6) of \cite{M3}, we get
	\begin{equation}\label{inqi}
	\frac{d}{dt}\mathcal{I}(\Si_t)\leq \frac{n-2}{n-1}\mathcal{I}(\Si_t)-\int_{\partial\Si_t}\frac{1}{H}\Pi(N,N)dl_t.
	\end{equation}
	
It remains to justify  (\ref{inqi}) even in the presence of jumps. In other words, when the flowing hypersurface
jumps, the total mean curvature strictly decreases 
and one can to extend  inequality (\ref{inqi}) through countably many jump times. 

Next consider the hypersurface  $\Sigma'=\partial_C\{\varphi>0 \}.$ From \cite[$\mathsection$4]{M3}, we know that $\Si'$ is strictly outward minimizing among hypersurfaces homologous to $\Si$. So define the following cutoff function 
$\Phi:[0,t]\to\mathbb R_+^n$ by 
\begin{equation*}
	\Phi(s)=\begin{cases}
	0\,&\text{on}\:\;[0,\tau]\\
	(s-\tau)/\varepsilon\,&\text{on}\:\;[\tau,\tau+\varepsilon]\\
	1&\text{on}\:\;[\tau+\varepsilon,t-\varepsilon]\\
	(t-s)/\varepsilon\,&\text{on}\:\;[t-\varepsilon,t].
	\end{cases}	
	\end{equation*}
	where  $\tau\in(0,t)$ and $\varepsilon\in(0,\frac{t-\tau}{2}).$ From Lemma \ref{tecn}, we have that $\phi=\Phi\circ \varphi$ satisfies 
	\begin{equation}\label{key}
	-\int_{\Omega_t}\langle D \phi, N\rangle Hdv 
	\leq \frac{n-2}{n-1}\int_{\Omega_t}\phi H^2dv-\int_{\partial_{\partial C}\Omega_t}\frac{\phi}{H}D_\mu Hdl.
	\end{equation}
	
	On the other hand,  making use of $D_\mu  H=-H\Pi(N,N)\leq0,$ $H = |D\varphi|$ a.e. and  the co-area formula we obtain the following inequality 
		\begin{align*}
	\frac{1}{\varepsilon}\Big[\int_{t-\varepsilon}^{\varepsilon}\mathcal{I}(\Si_s)ds-\int_{\tau}^{\tau+\varepsilon}\mathcal{I}(\Si_s)ds\Big]&=-\int_0^t\Phi'(s)\mathcal{I}(\Si_s)ds\\ 
	&= 
		-\int_{\Omega_t}\langle D \phi, N\rangle Hdv\\	
	&=
 \frac{n-2}{n-1}\int_{\Omega_t}\phi H^2dv \leq \frac{n-2}{n-1}\int_0^t\Phi(s)\mathcal{I}(\Si_s)ds.
	\end{align*}
	
	Taking the limit as $\varepsilon\to 0,$ we finally obtain
	$$\mathcal{I}(\Si_t)\leq\mathcal{I}(\Si_{\tau})
+ \frac{n-2}{n-1}\int_{\tau}^t\mathcal{I}(\Si_s)ds,\,\mbox{ for a.e. }0<\tau<t.$$
	
	For $n\leq7,$ Lemma $5.1$ and Lemma $5.5$ of \cite{M3} imply that 
	$$
	\Sigma_{\tau_i}\rightarrow \Sigma'\;\mbox{ as }\;\tau_i\searrow 0,
	$$
	locally in  $C^{1,\beta}$, $\beta<\frac{1}{2}.$ However, if $n>7$ and $\Sigma$ is free boundary outer-minimizing, then $\Sigma$ coincides with $\Si'$ and is disjoint from the singular set of  $\varphi,$ so the above convergence remains true.
	It follows from the  first variation formula of area  that $\Si_t$ possesses a weak mean curvature in $L^1$ (see (5.2) of \cite{M3}), which implies together the Riesz Representation Theorem that
	$$\mathcal{I}(\Si_{\tau_i})\to\mathcal{I}(\Si').$$
By the regularity result obtained in \cite{SWZ} and \cite{GJ} (see also Theorem 1.3 (iii) in \cite{HI}) and %the following fact about weak mean curvature
(see  (1.15) in \cite{HI})
 $$H_{\Sigma'}=0\; \mbox{ on } \;\Sigma'\backslash \Sigma\; \mbox{ and } \; H_{\Sigma'}=H_{\Sigma}\geq0\;\;\mbox{a.e.}\; \mbox{ on } \;\Sigma'\cap \Sigma,$$ 
 we conclude that 
	$\mathcal{I}(\Si')\leq \mathcal{I}(\Si).$ In particular,  
	$$\mathcal{I}(\Si_t)\leq\mathcal{I}(\Si)
	+ \frac{n-2}{n-1}\int_0^t\mathcal{I}(\Si_s)ds.\;\;\mbox{for a.e. $t>0$}.$$
	
Therefore the proposition follows directly  from Gronwall's Lemma.
	
\end{proof}

%%%%%%%%%%%%%%%%%%%%%%%%%%%%%%%%%%%%%%%%%%%%%
%%%%%%%%%%%%%%%%%%%%%%%%%%%%%%%%%%%%%%%%%%%%%

\section{Capacity inequalities }\label{icmtc}

%%%%%%%%%%%%%%%%%%%%%%%%%%%%%%%%%%%%%%%%%%%%%
%%%%%%%%%%%%%%%%%%%%%%%%%%%%%%%%%%%%%%%%%%%%%

Our first result is a capacity inequality for certain hypersurfaces with boundary in convex cones. In the sequel, we proof Theorem \ref{main3},  \ref{main4} and \ref{PFS}. 

First we recall that the infimum of (\ref{defcap}) is attained by a unique solution $\phi$ \footnote{The function $\phi$ is sometimes called of {\it electrostatic potential} of $\Si.$} of the following mixed boundary value 
problem in $C\backslash\overline\Omega$: 
\begin{equation}\label{mixed}
\begin{cases}
\Delta \phi =0\quad\text{in}\:\;C\backslash\overline\Omega\\
D_\mu \phi =0\quad\text{on}\:\;\partial_{\partial C}(C\backslash  \Omega)\\
\phi=0\;\;\text{on}\:\;\partial_{C}\Omega\;\;\text{and}\;\phi\to1\;\text{as}\;\;|x|\to\infty,
\end{cases}
\end{equation}
where $\mu$ is the outward unit normal to $\partial C$.  For a detailed proof of the existence and regularity of $\phi$, see \cite{IPS}\footnote{ In \cite{IPS}, the authors also provided a link between  Neumann parabolicity and  capacity of compact subsets.}.

\begin{proposition}\label{fenchel}
		Let $C\subset \mathbb{R}^{n}$ be a smooth  convex cone centered at the origin. Assume that  $\Omega$ is a smooth bounded domain containing the vertex of $C$ such that $\Sigma=\partial_C\Omega$ meets $\partial C$ orthogonally and is strictly mean convex. If 
		\begin{itemize}
			\item $n\leq7$; or
			\item $\Sigma$ is free boundary outer-minimizing in $C\backslash\Omega$.
		\end{itemize}
	 Then
	$$\mbox{Cap}(\Sigma)\leq \frac{1}{(n-1)\alpha_{n-1}}\mathcal{I}(\Si).$$
\end{proposition}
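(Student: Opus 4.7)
The plan is to mimic the Freire--Schwartz argument by building an explicit test function out of the IMCF foliation and plugging it into the Dirichlet energy. Under the stated hypotheses (either $n\le 7$, or free boundary outer-minimizing for $n>7$), Marquadt's weak solution $\varphi\in C^{0,1}_{\loc}(C\backslash\overline{\Omega})$ of the level-set formulation (\ref{imcf2}) exists globally, foliates $C\backslash\overline\Omega$ by $\Sigma_t = \partial_C\{\varphi<t\}$ up to a singular set of dimension at most $n-8$, and meets $\partial C$ orthogonally (in the classical sense away from the singular set, by Gr\"uter--Jost).

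Let $a:=\frac{n-2}{n-1}$ and define
\[
\phi(x):=1-e^{-a\,\varphi(x)}\qquad\text{on }C\backslash\overline{\Omega}.
\]
Then $\phi=0$ on $\Sigma=\partial_C\Omega$, $\phi\to 1$ at infinity, and the Neumann condition in (\ref{mixed}) holds because $\varphi$ itself satisfies $D_\mu\varphi=0$ on $\partial C$. So $\phi$ is an admissible competitor (Lipschitz; one may approximate by smooth functions for the infimum in (\ref{defcap}), or invoke the standard density result for the mixed problem). Since $|D\phi|=a\,e^{-a\varphi}|D\varphi|$ and $H=|D\varphi|$ a.e.\ on $\Sigma_t$, the coarea formula gives
\begin{align*}
E(\phi)
&=\frac{1}{(n-2)\alpha_{n-1}}\int_{C\backslash\Omega}|D\phi|^2\,dv
=\frac{a^2}{(n-2)\alpha_{n-1}}\int_0^\infty e^{-2at}\int_{\Sigma_t}\frac{|D\varphi|^2}{|D\varphi|}\,d\sigma\,dt\\
&=\frac{a^2}{(n-2)\alpha_{n-1}}\int_0^\infty e^{-2at}\,\mathcal{I}(\Sigma_t)\,dt.
\end{align*}

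Now I apply Proposition \ref{imcf}, which yields $\mathcal{I}(\Sigma_t)\le \mathcal{I}(\Sigma)\,e^{at}$. Inserting this bound,
\[
E(\phi)\le \frac{a^2\,\mathcal{I}(\Sigma)}{(n-2)\alpha_{n-1}}\int_0^\infty e^{-at}\,dt
=\frac{a\,\mathcal{I}(\Sigma)}{(n-2)\alpha_{n-1}}
=\frac{1}{(n-1)\alpha_{n-1}}\int_\Sigma H\,d\sigma,
\]
and since $\mbox{Cap}(\Sigma)\le E(\phi)$ by definition, the proposition follows. The exponent $a=(n-2)/(n-1)$ is precisely the one that balances the growth in Proposition \ref{imcf} against the $e^{-2at}$ decay, which is why the constant $1/[(n-1)\alpha_{n-1}]$ is sharp.

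The main obstacle I expect is the regularity/admissibility of $\phi$. The function $\varphi$ is only Lipschitz and can jump (its level sets can degenerate to a thick slab $\{\varphi=t\}$ at a jump time), so two things must be checked carefully: first, that $\phi=1-e^{-a\varphi}$ is a valid test function for (\ref{defcap}), which reduces to approximating Lipschitz competitors satisfying the Neumann condition by smooth ones in the mixed boundary setting of (\ref{mixed}) --- this is where the free boundary regularity from \cite{GJ} and Proposition 3.2 of \cite{V} are invoked; and second, that the coarea identity $H=|D\varphi|$ used above is valid a.e., which on jump regions requires knowing that $D\varphi=0$ there so those regions contribute nothing to $\int|D\phi|^2\,dv$. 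Both points were already set up in Section \ref{simcfb}, so the proof reduces to recording the computation above.
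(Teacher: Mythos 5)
Your proof is correct and follows essentially the same route as the paper: a test function of the form $f\circ\varphi$ built from the weak IMCF level-set function, the coarea formula (with $H=|D\varphi|$ a.e.), and the exponential bound on $\mathcal{I}(\Sigma_t)$ from Proposition \ref{imcf}. The only difference is the choice of profile: you take the explicit $f(t)=1-e^{-\frac{n-2}{n-1}t}$, while the paper uses the capacitary-optimal profile $f(t)=\Lambda\int_0^t \mathcal{I}(\Sigma_s)^{-1}\,ds$ and only then invokes Proposition \ref{imcf} to bound $\Lambda$; both yield the identical constant $\frac{1}{(n-1)\alpha_{n-1}}$.
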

\begin{proof}
	We follow the ideas of P\'olya and Szeg\"o (\cite[$\mathsection$ 2]{PS}, see also \cite{BM}). Let $\varphi$ be a solution to the inverse mean curvature flow for hypersurfaces with boundary in $C\backslash\Omega$ with initial solution $\Si$. We set $\phi(x)=f\circ\varphi(x)$, where 
	$$
	f(t)=\Lambda\int_0^t\frac{1}{\mbox T(s)}ds,\quad\Lambda=\Big(\int_0^{\infty}\mbox T(t)^{-1}dt\Big)^{-1}.
	$$
	
	Using the co-area formula we  obtain 
	\begin{align}\label{vari}
	(n-2)\alpha_{n-1}\mbox{Cap}(\Sigma)&\leq \int_{C\backslash \Omega}|D \phi|^2dv\nonumber\\
	&= \int_0^{\infty}f'(t)^2 \mbox T(t)dt+\int_0^{\infty}f'(t) \int_{\partial\Si_t}D_\mu\varphi dl_t dt,
	\end{align}
	where $\mbox T(t)=\int_{\Sigma_t}|D\varphi| d\sigma.$
	
	From Proposition \ref{imcf} and the fact that $D_\mu\varphi=0$ on $\partial_{\partial C}(C\backslash  \Omega),$ we have   
	$$\left(\int_0^{\infty}\frac{1}{\mbox T(t)}dt\right)^{-1}\leq \frac{n-2}{n-1}\int_{\Sigma}Hd\sigma.$$
	Therefore
	$$\mbox{Cap}(\Sigma)\leq \frac 1{(n-1)\alpha_{n-1}}\mathcal{I}(\Si).$$
\end{proof}

\subsection{Proofs of Theorem \ref{main3},  \ref{main4} and \ref{PFS}:}
\begin{proof}[Proof of Theorem \ref{main3}] 
	By Proposition \ref{fenchel},
	\begin{equation}\label{torigid}
	\mbox{Cap}(\Sigma)\leq \frac{1}{(n-1)\alpha_{n-1}}\mathcal{I}(\Si).
	\end{equation}

	 When the equality holds in (\ref{torigid}), (\ref{evol1}) is in fact an equality. Thus, 
	$$H^2=(n-1)|A|^2\mbox{ on }\Sigma_t,\mbox{ for a.e. }t\geq 0\quad \mbox{and}\quad\Pi(N,N)=0,$$
	So $\Si_t$ is a union of  pieces of totally umbilical spheres, for a.e. $t\geq0.$ Note that the evolution by IMCF for hypersurfaces with boundary does not have any jumps and the flow remains classical( otherwise (\ref{evol1}) would be strictly). Another consequence is that  $\Sigma$ has to be  connected. 
	
	In the following,  this part of the proof follows ideas of  Theorem 4.9 in \cite{RS}.
	Let $L(\vec e)$ be the linear subspace generated by a vector $\vec e\in\mathbb R^{n}.$  Given $x\in\Si,$ the normal line $x+L(N(x))$ to $\Si$ contains the
	center $\mathbf{c}$ of a sphere containing $\Si.$ Take a point $\tilde x\in\partial\Si$ at maximum distance from the vertex of the
	cone. As $\Si$ meets $\partial C$ orthogonally,  we can assert that $N(\tilde x)$ is proportional to $\tilde x,$ and so $\mathbf{c}\in\partial C$.
	
	In order to prove that $\mathbf{c}=\vec{0}$, and thus $\Si$ is the intersection of a sphere centered at the vertex with the cone, we will argue by contradiction. Suppose that  $\mathbf{c}\neq\vec{0}$. Now, pick a point $x\in\partial \Si\backslash\vec 0$ and since $N(x)$ is tangent to $\partial C$ (by the orthogonality condition), we have that  $T_x\partial\Si$ contains the straight line $x+L(N(x))$. Hence  $\mathbf{c}$ and $x$ belong to $\partial C\cap T_x(\partial C)$  and, by convexity, the segment  line $l = \{ \mathbf{c}+tx;\; t\in[0,1]\}$ is contained in $\partial C.$ Therefore $\partial S\subset T_{\mathbf{c}}(\partial C)\cap \partial C$ and thus $\partial S$ is a great circle which bounds a flat region in $T_{x_0}(\partial C)\cap \partial C$, a contradiction.
	\end{proof}

\begin{proof}[Proof of Theorem \ref{main4}] 
	
	According to Marquadt \cite{M2}, consider the foliation $\{\Si_t\}_{t\geq0}$ given by IMCF in $C\backslash\Omega$ such that $\Si_0=\Si.$ 
	Define the function
	$$h(t):=\frac{\mathcal{I}(\Sigma_{t})}{\mbox{Area}(\Sigma_{t})^{\frac{n-2}{n-1}}}.$$
	
	We recall that the area element evolves in the normal direction by
	$$
	\frac{d}{dt}d\sigma_t=d\sigma_t.
	$$
	Therefore the area satisfies
	$$
	\frac{d}{dt}\mbox{Area}(\Si_t)=\mbox{Area}(\Si_t),\;\;\mbox{Area}(\Si_t)=\exp(t)\mbox{Area}(\Si_0),\;\;t\geq0,
	$$
	which together with Proposition \ref{imcf} implies that $h(t)$ is  non-increasing along IMCF in $C\backslash\Omega$. 
	From Theorem \ref{marqcone},  problem (\ref{icmf1})  has a unique smooth solution for all time and the rescaled hypersurface $\Sigma_t$  converges smoothly and exponentially  to a unique piece of the round sphere as $t\to \infty.$ 
Thus, at infinity $h(t)$ converges to $(n-1)\alpha_{n-1}^{1/(n-1)}.$

Therefore 
	$$\mbox{Area}(\Si)^{\frac{2-n}{n-1}}\mathcal{I}(\Sigma)=h(0)\leq h(t)\to(n-1)\alpha_{n-1}^{1/(n-1)}.$$
	From this, (\ref{al-fen})  follows easily.
	
	To prove the rigidity statement, we notice that if the inequality (\ref{al-fen}) becomes an equality we have $h(t)=h(0)$ for all $t$. Thus, (\ref{evol1}) is also an equality and, by reasoning as in proof of Theorem \ref{main3}, the rigidity follows.
\end{proof}

Inspired by the proof of Theorem 11 in \cite{J}, we prove

\begin{proof}[Proof of Theorem \ref{PFS}]
	Let $\varphi$ be a function satisfying (\ref{mixed}). Then
	\begin{align*}
	\mbox{Cap}(\Si) &=  \frac{1}{(n-2)\alpha_{n-1} }\int_{C \backslash\Omega} |D \varphi|^2 dv.
	\end{align*}
	Since the level sets of $\varphi$ give the foliation of $C\backslash\Omega$ we have by the co-area formula that
	\begin{equation*}
	\int_{C \backslash \Omega} |D \varphi|^2 dv = \int_0^1 \int_{\Sigma_t} |D \varphi| d\sigma_t dt+ \int_0^1 \Big(\int_{\partial\Sigma_t} D_\mu \varphi dl_t\Big) dt,
	\end{equation*}
	where $\Sigma_t=\varphi^{-1}(t)$ for $t \in [0,1)$.  
	
	We observe that on the right hand side we have
	\begin{equation}\label{sch}
	\int_{C \backslash \Omega} |D \varphi|^2 dv \geq \int_0^1 \frac{\mbox{Area}(\Sigma_t)^2}{\int_{\Sigma_t} |D \varphi|^{-1} d\sigma_t} dt.
	\end{equation}
	In addition, for our purposes it will be convenient to rewrite the integral on right hand side of (\ref{sch}) with help of the following expressions:
	\begin{itemize}
		\item  $\frac{d}{dt}\mbox{Vol}(\Omega_t)=\int_{\Sigma_t} \frac{1}{|D \varphi|} d\sigma_t+\int_{\partial\Sigma_t} \frac{1}{|D \varphi|^2 }D_\mu \varphi\; dL_t$
		\item $
		\mbox{Area}(\Sigma_t)\geq n\mbox{Vol}(C(\alpha,1))^{\frac{1}{n}}\mbox{Vol}(\Omega_t)^{\frac{(n-1)}{n}},
		$
	\end{itemize}
where the second line is the isoperimetric  inequality for convex cones proved by Lions and Pacella \cite{LP}).

	Thus we obtain that
	\begin{align*}
	\mbox{Cap}(\Si) \geq\frac{1}{(n-2)\alpha_{n-1} }\int_0^1 \frac{\alpha_{n-1}^2 \left(\frac{\mbox{Vol}(\Omega_t)}{\mbox{Vol}(C(\alpha,1))}\right)^{\frac{2(n-1)}{n}}}{d(\mbox{Vol}(\Omega_t))/dt} dt.
	\end{align*}
	
	We let $R(t)$ denote be the radius of the sphere whose intersection with the cone  has volume equal to $\mbox{Vol}(\Omega_t)=\mbox{Vol}(C(\alpha,1))R(t)^n$. Thus we get 
	\begin{equation}
	\label{capvol}
	\mbox{Cap}(\Si) \geq \frac{1}{(n-2)} 
	\int_0^1  \frac{R(t)^{n-1}}{R'(t)} dt.
	\end{equation}

	Let $\hat\Omega\subset C$  be an open sector with radius $R(t)$, vertex at origin and same volume as $\Omega$. Consider a function $\Psi:C\backslash\hat\Omega  \to\mathbb{R}^n$ such that $\Psi^{-1}(t)=\hat \Sigma_t$ and $D_\mu\Psi=0$ on $C\setminus\hat\Omega$, where $\hat  \Sigma_t=\partial_C\hat\Omega_t.$ 	Using that   $\mbox{Area}(\hat {\Sigma}_t)=\alpha_{n-1} R(t)^{n-1} $ and $|D \Psi| = \frac{1}{R'(t)}$ on $\hat \Sigma_t$, we can rewrite (\ref{capvol})  as
	
	\begin{align*}
	\mbox{Cap}(\Si) \geq \frac{1}{\alpha_{n-1}(n-2)} 
	\int_0^1  \int_{\hat  \Sigma_t}|D \Psi| d\hat\sigma_t dt.
	\end{align*}
 Finally, using once more the co-area formula we deduce that 
	$$\mbox{Cap}(\Si)\geq \mbox{Cap}(\hat \Sigma)=\left(\frac{\mbox{Vol}(\Omega)}{\mbox{Vol}(C(\alpha,1))}\right)^{\frac{n-2}{n}}.$$
	
The rigidity statement is consequence of the one given by the isoperimetric inequality.
\end{proof}

\begin{remark} Using alternative arguments, an another proof of the above theorem can be given. The idea is as follow. 
	Let $C$ be a convex cone. Assume that $\Omega\subset C$ is a smooth domain containing the vertex of $C$, $\hat\Omega$ is a convex sector with $\mbox{Area}(\hat\Omega)=\mbox{Area}(\Omega)$  and $u$ is  a function in $V^p(\Omega):=\{v\in H^{1,p}(\Omega),\;v=0\mbox{ on }\Sigma=\partial_C\Omega\}.$ One can define the  {\it Symmetrization} of $u$ as in \cite{ LP2,LP3} , which  is a transformation associating $u$  to a (unique) radial decreasing function $\hat u\in V^p(\hat\Omega)$ having the same distribution function as $u$. So the proof would follow along the same line as that given by Lemma $14$ and Lemma $15$ of \cite{S}. We also should mention that this symmetrization has the usual properties of the Schwarz symmetrization and can be used to prove classical isoperimetric inequalities for convex cones as in \cite{LP} or even to estimate the best Sobolev constant for embeddings \cite{LP2}. 
\end{remark}

%%%%%%%%%%%%%%%%%%%%%%%%%%%%%%%%%%%%%%%%%%%%
%%%%%%%%%%%%%%%%%%%%%%%%%%%%%%%%%%%%%%%%%%%%%

\section{Model case and mass-capacity inequalities}\label{Model}

%%%%%%%%%%%%%%%%%%%%%%%%%%%%%%%%%%%%%%%%%%%%%
%%%%%%%%%%%%%%%%%%%%%%%%%%%%%%%%%%%%%%%%%%%%%

We first give a brief discussion about asymptotic flatness and mass. A Riemannian manifold  $(M,g)$, $n\geq 3$, with a non-compact boundary $\partial M$  is {\em asymptotically flat}  if there exists a compact $K\subset M$ and a diffeomorphism $$\Psi: M\backslash K\to \mathbb R^n_+\cap \{|x|\leq 1\}$$  such that
$$
g_{ij}=\delta_{ij}+O(|x|^{-p}),
$$
and 
$$
\partial_kg_{ij}=O(|x|^{-p-1}) \quad\textrm{and}\quad \partial_l\partial_kg_{ij}=O(|x|^{-p-2})
$$
for some  $p>(n-2)/2$. The closed half-space $\mathbb R^n_+$ endowed with the standard flat metric $\delta$ is example of metric asymptotically flat. 

Assume that $R_g$ and $H_g$ are integrable in $M$ and $\partial M$, respectively.
In asymptotically flat coordinates, the mass of $(M,g)$ is given by
\begin{align}\label{massadm}
{\mathfrak m}(g)&:=&\lim_{r\to +\infty}\Big\{\frac{1}{2(n-1)\omega_{n-1}}\sum_{i,j=1}^{n}\int_{\{x\in \mathbb{R}^n_{+},\;|x|=r\}}(g_{ij,j}- g_{jj,i})\frac{x_i}{r} d\sigma_r \nonumber\\
& &+\frac{1}{2(n-1)\omega_{n-1}}\sum_{\beta=1}^{n-1}\int_{\{x\in \partial \mathbb{R}^n_{+},\;|x|=r\}} g_{\beta n}\frac{x_{\beta}}{r} d\sigma_r \Big\},
\end{align}
where $\omega_{n-1}$ is the volume of the $n-1$-dimensional sphere.

The above definition is independent of the particular choice of the chart at infinity what means that  the mass is a geometric invariant. As already mentioned, we also have a Positive mass theorem in this context (see \cite{ABdL}, Theorem 1.1), which states that an asymptotically flat manifold with non-negative scalar curvature and mean convex boundary has nonnegative mass if either $3\leq n\leq  7$ or $n\geq 3$ and $M$ is spin. Moreover the mass is zero if only if it is isometric to $\mathbb R^n_+$ with the flat metric.

In the following, we recall the definition of the
{\it half Schwarzschild space} of mass $\mathfrak m>0$ which is the set $\{x\in \mathbb R^n_+;|x|\geq (\mathfrak{m}/2)^{\frac{1}{n-2}}\}$ endowed with the following conformal metric 
$$
g_{\mathfrak{m}}=\left(1+\frac{\mathfrak m}{2}|x|^{2-n}\right)^{\frac{4}{n-2}}\delta,\;\;\mathfrak m>0. 
$$
This manifold is scalar-flat with a non-compact totally geodesic boundary $x_n=0$ and the coordinate hemisphere of radius $(\mathfrak{m}/2)^{\frac{1}{n-2}}$  is the unique free boundary area-minimizing horizon. A straightforward computation gives that the mass $\mathfrak m(g_m)$  is half the ADM mass of the standard Schwarzschild space. In fact, the double manifold of the half Schwarzschild space along its totally geodesic boundary is exactly the Schwarzschild space.

 We start by recalling some well known formulae. If  $g=u^{\frac4{n-2}}\delta$ for some positive smooth function $u$ on $M$, we know that 
\begin{equation}\label{conformal}
\begin{cases}
R_g=u^{-\frac{n+2}{n-2}}\Big(-\frac{4(n-1)}{(n-2)}\Delta u+R u\Big)\,\:\text{in}\: \; M \,\\
H_g=u^{-\frac{n}{n-2}}\Big( \frac{2(n-1)}{(n-2)}D_{\mu}u +H u\Big) \;\;\text{on}\: \; \partial M \,,
\end{cases}
\end{equation}
where $\mu$ and $\Delta$ denote the outward unit normal
vector to $\partial M$ and the Laplace-Beltrami operator with respect to $\delta$, respectively. 

As consequence of (\ref{conformal}), those geometric assumptions on metrics in $\mathcal M$  (i.e.,  scalar curvature $R_g\geq0$ and mean curvature $H_g\geq0$ ) are equivalents to assume  that  $\Delta u \leq0$ in $M$  and  $D_{\mu}u\geq0$ on $S$.

Another remarkable fact is  that  the maximum principle implies that a superharmonic
function $u$ on the half-space $\mathbb{R}_+^n$ which is $1$ at infinity and satisfies  $D_{x_n}u=0$ on $\partial \mathbb{R}_+^n$  is, in fact,  identically $1$. We emphasize that geometrically this means  we can not conformally deform  \footnote{In fact, one can not have any compact deformation which is a consequence of the positive mass theorem} the half-space standard metric in a bounded region  without
decreasing the scalar curvature or  mean curvature on the boundary somewhere.

Next we calculate the capacity of the Riemannian half Schwarzschild manifold. For $R>0$, consider a  function $u$ on $\{x\in \mathbb R^n_+;\;|x|\geq R=(\mathfrak{m}/2)^{\frac{1}{n-2}}\} $ 
defined by
 $$u=1+\left(R/|x|\right)^{n-2}.$$
 We notice that $u$ may be defined  and  harmonic in $\mathbb{R}_+^n\backslash\{0\}$  and satisfying $D_\mu u=0$ on $\partial\mathbb{R}_+^n\backslash\{0\}$  as well.

 Setting 
$$
\varphi=\frac{2-u}{u},
$$ 
we obtain by conformal change that $\Delta_g\varphi=0$ and   
$D_{\mu_{g}} \varphi=0$ on $S$. Note also that $\varphi=0$ on $\Sigma$ and $\varphi\to \infty$ at infinity. Thus, a straightforward calculation  implies that 
$\mbox{Cap}(\Sigma,g_\mathfrak{m})=\mathfrak{m}.$

Assume  that $(M,g)$ is asymptotically flat satisfying $R_g\equiv 0\equiv H_g$ near infinity, where $g=u^{\frac{4}{n-2}}\delta$. Thus,
\begin{equation*}
\begin{cases}
\Delta u =0& \;\;\text{in}\;\;\ \mathbb{R}^n_+ \\
D_{x_n} u=0  &\;\;\text{on}\;\;\partial \mathbb{R}^n_+\,,
\end{cases}
\end{equation*}
for $|x|$ large which allows to write 
\begin{equation}\label{exp:u}
u(x)=1+\frac{\mathfrak{m}}{2}|x|^{2-n}+O(|x|^{1-n}).
\end{equation}
 Such expansion might simplifies some calculations. So the next step is to state an important approximation lemma by harmonically flat metric at infinity.

\begin{lemma}(\mbox{Almaraz-Barbosa-de Lima} \cite{ABdL})\label{approxi}
	Let $(M,g)$ be an asymptotically flat manifold, conformally 
flat manifold with nonnegative scalar curvature $R_g\geq 0$ and  mean convex boundary $H_g\geq 0$. For  any $\epsilon>0$ small enough there exists an asymptotically flat $\bar{g}$ satisfying:
	\begin{itemize}
		\item[i)] $R_{\bar{g}}\geq 0$ and  $H_{\bar{g}}\geq 0$, with $R_{\bar{g}}\equiv 0$ and  $H_{\bar{g}}\equiv 0$  near infinity;
		\vspace{0,1cm}
		\item[ii)]  $\bar{g}$ is conformally flat near infinity;
\vspace{0,1cm}
		\item[iii)] $|\mathfrak m(\bar g)- \mathfrak m(g)|\leq \varepsilon$. 
	\end{itemize}
	
\end{lemma}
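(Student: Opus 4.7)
The plan is a Schoen--Yau type density argument: modify the conformal factor $u$ by cutting off its PDE sources outside a large Euclidean ball, producing a new factor $\bar u_\sigma$ which is harmonic with homogeneous Neumann condition far from the origin, while remaining superharmonic with nonnegative Neumann derivative globally. Writing $g=u^{4/(n-2)}\delta$, the conformal formulae (\ref{conformal}) show that $R_g\ge 0$ in $M$ and $H_g\ge 0$ on $\partial M$ are equivalent to $\Delta u\le 0$ in $M$ and $D_\mu u\ge 0$ on $\partial M$; the integrability of $R_g$ and $H_g$ that follows from asymptotic flatness gives $\Delta u\in L^1(M)$ and $D_\mu u\in L^1(\partial M)$. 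Moreover, the expansion (\ref{exp:u}) identifies $\mathfrak m(g)$ with (twice) the leading coefficient of $u-1$ at infinity.

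Fix $\sigma\gg 1$ large enough that the compact part $\Sigma\subset\partial M$ lies in $\{|x|<\sigma\}$, and let $\chi_\sigma\in C_c^\infty(\mathbb{R}^n)$ be a cutoff with $\chi_\sigma\equiv 1$ on $\{|x|\le\sigma\}$ and $\chi_\sigma\equiv 0$ on $\{|x|\ge 2\sigma\}$. Define $\bar u_\sigma$ as the unique solution of the mixed problem
\begin{equation*}
\begin{cases}
\Delta \bar u_\sigma = \chi_\sigma\,\Delta u & \text{in } M,\\
D_\mu \bar u_\sigma = \chi_\sigma\, D_\mu u & \text{on } \partial M,\\
\bar u_\sigma \to 1 & \text{at infinity},
\end{cases}
\end{equation*}
obtained by standard elliptic theory for compactly supported Neumann data on asymptotically flat exterior domains (existence and positivity of $\bar u_\sigma$ follow from a minimum principle together with the decay $\bar u_\sigma\to 1$). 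Set $\bar g:=\bar u_\sigma^{4/(n-2)}\delta$. The inequalities $\chi_\sigma\ge 0$, $\Delta u\le 0$, $D_\mu u\ge 0$ force $\Delta\bar u_\sigma\le 0$ and $D_\mu\bar u_\sigma\ge 0$, and (\ref{conformal}) then yields $R_{\bar g}\ge 0$ and $H_{\bar g}\ge 0$ globally; both sources vanish on $\{|x|\ge 2\sigma\}$, so $R_{\bar g}\equiv 0\equiv H_{\bar g}$ there, proving (i). Property (ii) is automatic since $\bar g$ is globally conformally flat.

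For (iii), write $v_\sigma:=\bar u_\sigma-u$, which solves the mixed problem with sources $(\chi_\sigma-1)\Delta u$ and $(\chi_\sigma-1)D_\mu u$ supported in $\{|x|\ge\sigma\}$. Reflecting across $\partial\mathbb{R}^n_+$ turns the Neumann data on the half-space boundary into a surface distribution for the Laplacian on $\mathbb{R}^n$; convolving with the Newton kernel $c_n|x|^{2-n}$ yields an exterior expansion $v_\sigma(x)=\Theta_\sigma|x|^{2-n}+O(|x|^{1-n})$ with
$$
\Theta_\sigma = c_n\Bigl(\int_M(1-\chi_\sigma)\Delta u\,dv + \int_{\partial M}(1-\chi_\sigma)D_\mu u\,d\sigma\Bigr).
$$
Since $\Delta u\in L^1(M)$ and $D_\mu u\in L^1(\partial M)$, dominated convergence gives $\Theta_\sigma\to 0$ as $\sigma\to\infty$. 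Because the mass of a conformally flat asymptotically flat metric depends only on the $|x|^{2-n}$-coefficient of its conformal factor, this implies $\mathfrak m(\bar g)-\mathfrak m(g)\to 0$, and (iii) follows by choosing $\sigma$ sufficiently large.

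The main technical obstacle is making the asymptotic expansion of $v_\sigma$ rigorous on $M=\mathbb{R}^n_+\setminus\Omega$: one must show that the Neumann Green function of $M$ differs from that of $\mathbb{R}^n_+$ only by a lower-order correction, so that the leading $|x|^{2-n}$-coefficient of $v_\sigma$ is genuinely captured by the source integrals above. The reflection trick across $\partial\mathbb{R}^n_+$ reduces the half-space part to the classical Euclidean analysis of Newtonian potentials, while the compact boundary piece $\Sigma$ contributes only a bounded correction that can be absorbed into the $O(|x|^{1-n})$ remainder.
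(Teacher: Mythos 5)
You should first be aware that the paper contains no proof of this lemma: it is imported wholesale from \cite{ABdL} as a known density result, so there is no in-paper argument to compare against. Your construction is the standard Schoen--Yau--type route that the cited reference adapts to non-compact boundaries: cut off the sources $\Delta u\le 0$ and $D_\mu u\ge 0$, solve the resulting linear mixed problem for $\bar u_\sigma$, and use the minimum principle plus the Hopf lemma to keep $\bar u_\sigma>0$. Parts (i) and (ii) are fine on the non-compact face $S$, where the Euclidean mean curvature vanishes and $H_{\bar g}\ge 0$ is literally the sign of $D_\mu\bar u_\sigma$. One caveat if the compact face $\Sigma$ is counted as part of $\partial M$ (as the paper's applications require): matching the Neumann data there is not enough, because $H_{\bar g}|_\Sigma$ also carries the term $H_\delta\,\bar u_\sigma$, and your own comparison argument forces $\bar u_\sigma\le u$, so the sign of $H_{\bar g}|_\Sigma$ is not controlled by $H_g|_\Sigma\ge 0$ alone. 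In \cite{ABdL} the manifold has only the non-compact boundary, so the issue does not arise there, but it deserves a remark if you want the lemma in the form this paper actually uses it.

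The genuine soft spot is part (iii). You assert $v_\sigma(x)=\Theta_\sigma|x|^{2-n}+O(|x|^{1-n})$, but the sources of $v_\sigma$, namely $(\chi_\sigma-1)\Delta u$ and $(\chi_\sigma-1)D_\mu u$, are supported on the \emph{unbounded} region $\{|x|\ge\sigma\}$ and are only known to be in $L^1$ with pointwise decay $O(|x|^{-p-2})$, $p>(n-2)/2$; the Newtonian potential of such a tail need not have a remainder as good as $O(|x|^{1-n})$ (for $p$ close to $(n-2)/2$ it may decay only like $|x|^{-p}$, which is slower than $|x|^{2-n}$), so the claimed expansion is unjustified as stated. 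The standard repair avoids pointwise expansions entirely: first check that for a conformally flat asymptotically flat metric the flux (\ref{massadm}) collapses to $-\frac{2}{(n-2)\omega_{n-1}}\lim_{\rho\to\infty}\int_{\mathbb{S}_+(\rho)}D_r u\,d\sigma_\rho$ (the correction coming from the factor $u^{(6-n)/(n-2)}$ is $O(\rho^{n-2-2p})=o(1)$, and the $g_{\beta n}$ boundary term vanishes for conformally flat metrics); then $\mathfrak m(\bar g)-\mathfrak m(g)$ is the flux of $Dv_\sigma$ at infinity, which by the divergence theorem equals a dimensional constant times $\int_M(1-\chi_\sigma)\Delta u\,dv-\int_{S}(1-\chi_\sigma)D_\mu u\,d\sigma$ up to sign, and this tends to $0$ by dominated convergence exactly as you say. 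With that substitution your argument closes.
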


Now we can relate  the capacity defined by $g$ and the euclidean capacity. 

\begin{proposition} \label{prop} Let $g\in\mathcal{M}.$ If  $(M,g),\; n\ge 3,$ is asymptotically flat. Then  
	\begin{equation}\label{inqcap}
			\mbox{Cap}(\Sigma,g)\leq \mbox{Cap}(\Sigma)+\frac{\mathfrak{m}(g)}{2}.
	\end{equation}
	Equality holds if and only if $\Delta  u=0$ in $M$ and $D_\mu u=0$ on $S$.\\	
	
\end{proposition}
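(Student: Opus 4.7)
The plan is to bound $\mbox{Cap}(\Sigma,g)$ from above by constructing a test function out of the Euclidean capacity potential $\phi$ of $\Sigma$---the unique harmonic function solving the mixed problem (\ref{mixed}). The model computation in the paragraph preceding the proposition, which shows that the $g$-capacity potential of the half Schwarzschild metric is $(2-u)/u$, strongly suggests taking
\[
\tilde\phi := \frac{\phi}{u}
\]
as a test function: it vanishes on $\Sigma$ (since $\phi$ does) and tends to $1$ at infinity, so it is admissible in the variational definition of $\mbox{Cap}(\Sigma,g)$.

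Since $|D\tilde\phi|_g^2\,dv_g = u^2|D\tilde\phi|^2\,dv$, a direct expansion yields the algebraic identity
\[
u^2|D\tilde\phi|^2 \;=\; |D\phi|^2 \;-\; \langle Du,\,D(\phi^2/u)\rangle.
\]
Integrating over $M$ and integrating the last term by parts gives
\[
(n-2)\alpha_{n-1}\,\mbox{Cap}(\Sigma,g) \;\le\; (n-2)\alpha_{n-1}\,\mbox{Cap}(\Sigma) + \int_M \frac{\phi^2}{u}\Delta u\,dv - \int_{\partial M}\frac{\phi^2}{u}\,D_\nu u\,d\sigma,
\]
where $\nu$ is the outward unit normal to $M$ and I have used that $\phi$ minimizes the Euclidean Dirichlet integral.

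The hypotheses on $g\in\mathcal{M}$ now yield favorable signs via (\ref{conformal}): $R_g\ge 0$ is equivalent to $\Delta u\le 0$ in $M$, so the bulk integral is non-positive; $H_g\ge 0$ is equivalent to $D_\mu u\ge 0$ on $S$. The boundary integral splits into a contribution on $\Sigma$ (which vanishes since $\phi\equiv 0$ there), a contribution on $S$ (non-negative, which enters with a favorable sign after the minus), and a flux at infinity. To evaluate the last piece I would invoke the approximation Lemma \ref{approxi} to reduce to the case where $u$ is harmonic and satisfies $D_\mu u=0$ on $S$ near infinity, whence the expansion $u = 1 + \frac{\mathfrak{m}(g)}{2}|x|^{2-n} + O(|x|^{1-n})$ holds. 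Combined with $\phi^2/u\to 1$ at infinity, the limiting flux evaluates to $-(n-2)\alpha_{n-1}\,\mathfrak{m}(g)/2$. Collecting everything and dividing by $(n-2)\alpha_{n-1}$ delivers (\ref{inqcap}).

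For the rigidity statement, equality in (\ref{inqcap}) forces every intermediate inequality to be an equality. The vanishing of $\int_M (\phi^2/u)\Delta u\,dv$, combined with the strong maximum principle applied to (\ref{mixed}) giving $\phi>0$ in the interior of $M$, forces $\Delta u\equiv 0$; an analogous argument on $S$ gives $D_\mu u\equiv 0$. Conversely, under these two conditions one has $L_g \tilde\phi = u^{-(n+2)/(n-2)} L_\delta(u\tilde\phi) = 0$, so $\tilde\phi$ is genuinely the $g$-capacity potential and the chain collapses to an equality. The main technical obstacle I anticipate is making the integration by parts rigorous at the free-boundary corner $\partial\Sigma=\Sigma\cap S$, where classical smoothness of $\phi$ fails; this should be controlled via the Gr\"uter--Jost free-boundary regularity invoked earlier in Section~\ref{simcfb}, together with a standard cut-off argument near the corner.
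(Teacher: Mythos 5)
Your proposal is correct and follows essentially the same route as the paper: the test function $\phi/u$, the expansion of $u^2|D(\phi/u)|^2$, integration by parts against the sign conditions $\Delta u\le 0$ and $D_\mu u\ge 0$ from (\ref{conformal}), and the evaluation of the flux at infinity via Lemma \ref{approxi} and the expansion (\ref{exp:u}) all match the paper's argument. The only (minor) divergence is in the equality case, where you use the strong maximum principle ($\phi>0$ in the interior) to force $\Delta u\equiv 0$ and $D_\mu u\equiv 0$ directly, whereas the paper runs a contradiction argument with the approximating metrics $g_k$ to produce a quantitative gap; your version is, if anything, cleaner.
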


\begin{proof}		
	Assume that near infinity $u$ is harmonic and satisfies $D_\mu u=0$ on $S$. 
	Let us consider $w:\mathbb{R}^n_{+}\backslash \Omega \rightarrow (0,1)$ satisfying the mixed boundary value problem (\ref{mixed}) by replacing $C$ by $\mathbb{R}^n_{+}$. Now,  taking $\phi=\frac{w}{u}$ we have
	$$
	\int_{M}|D_g\phi|_g^2dv_g =\int_{\mathbb{R}^n_{+}\backslash \Omega}u^2\Big|D \Big( \frac{w}{u}\Big)\Big|^2dv   =\lim_{\rho\rightarrow \infty}\int_{I(\rho)}u^2\Big|D \Big( \frac{w}{u}\Big)\Big|^2dv,
	$$
	where $I(\rho):=\{x\in  \mathbb{R}_+^{n};\;|x|\leq \rho\}\backslash \Omega .$
	 %with. 
	Note that  
	\begin{align*}
		\int_{I(\rho) }u^2\Big|D \Big( \frac{w}{u}\Big)\Big|^2dv  &=\int_{I(\rho) } |D  w|^2-\Big\langle D(w^2),\frac{D u}u\Big\rangle dv  \\
		& +\int_{I(\rho) } w^2\frac{|D u|^2}{u^2}dv .
	\end{align*}
	
	Assume that $\rho$ is sufficiently large. The divergence theorem and the fact that $w=0$ on $\Si$ yield 
	\begin{align*}
	\int_{I(\rho)}\Big\langle D (w^2), \frac{D  u}u\Big\rangle dv  & =-\int_{I(\rho) } w^2\mbox{div}\Big(\frac{D  u}{u}\Big)dv  \\\nonumber
	&+ \int_{\mathbb{S}_+(\rho)}w^2\frac{D_r u }ud\sigma_{\rho}+\int_{\partial_{\partial \mathbb{R}_+^{n}}I(\rho)}w^2\frac{D_\mu u }ud\sigma_{\rho},\nonumber
	\end{align*} 
	where $r=|x|$ and $\mathbb{S}_+(\rho)$ is a large coordinate hemisphere of radius $\rho$. Since $\mbox{div}\Big(\frac{D  u}{u}\Big)=\frac{\Delta u}{u}-\frac{|D_{\mu}u|^2}{u},$ $\Delta u\leq 0$  in $\mathbb{R}^n_{+}\backslash \Omega$  and $D_\mu u\geq0$ on $\partial_{\partial\mathbb{R}^n_+}(\mathbb{R}^n_{+}\backslash \Omega)$ we conclude 
	
	\begin{align}\label{equ}
	\int_{I(\rho)}\Big\langle D (w^2), \frac{D  u}u\Big\rangle dv  \geq \int_{I(\rho) }w^2\frac{|D  u|^2}{u^2}dv +\int_{\mathbb{S}_+(\rho)}w^2\frac{D_r u }ud\sigma_{\rho}.
	\end{align}
	
	Combining all the facts above and taking the limit as $\rho\to\infty$, we get 
	$$2\int_{\mathbb{R}^n_{+}\backslash \Omega}u^2\Big|D \Big( \frac{w}{u}\Big)\Big|^2dv  \leq(n-2)\omega_{n-1}\Big(\mbox{Cap}(\Sigma)+\frac{\mathfrak{m}(g)}{2}\Big),$$
	and the inequality follows.\\
	
	Assume that (\ref{inqcap}) is an equality, but either  $\Delta 
	u<0$ in $\mathbb{R}^n_{+}\backslash \Omega$ or $D_\mu u >0$ on $\partial_{\partial\mathbb{R}^n_+}(\mathbb{R}^n_{+}\backslash \Omega).$ Therefore, suppose without loss of generality  that there exists $x_0\in\partial\mathbb{R}^n_+$  so that  $D_\mu u\ge c> 0$ on $\partial_{\mathbb{R}^n_+}(
	%\mathbb{R}^n_+\cap
	\{
	|x-x_0|\leq b\})$ for some constants $b,c>0.$

	By Lemma \ref{approxi}, we can take a sequence of metrics $\{g_k\},$ $g_k=u_k\delta,$ where each $u_k$ approximates of $u$. So, for $\rho,k\ge |x_0|+b$,  we can rewrite \eqref{equ}  as
	\begin{align*}
		\int_{I(\rho)}\Big\langle D w^2, \frac{D  u_k}{u_k}\Big\rangle dv  &\geq \int_{I(\rho) }w^2\frac{|D  u_k|^2}{u_k^2}dv  +\int_{\mathbb{S}_+(\rho)}w^2\frac{D_r u_k  }{u_k}d\sigma_{\rho}\\
		&+\int_{I(\rho,x_0)}w^2\frac{D_\mu u_k }{u_k}d\sigma_{b} \\
		&\ge   \int_{I(\rho) }w^2\frac{|D  u_k|^2}{u_k^2}dv  +\int_{\mathbb{S}_+(\rho)}w^2\frac{D_r u_k  }{u_k}d\sigma_{\rho}+C_0
	\end{align*}
	where $I(\rho,x_0)=\partial_{\mathbb{R}^n_+}(\{|x-x_0|\leq b\})$ and  $C_0>0$ is a  constant depending on $w,u,c,b$ and $n$. 
	Taking limits as $\rho\to\infty$ and  $k\to\infty$  we get

	$$\mbox{Cap}(\Sigma,g)\le \mbox{Cap}(\Sigma)+\frac{\mathfrak{m}(g)}{2}-2\omega_{n-1}^{-1}C_0.$$ 
	This leads to a contradiction because $C_0$ is positive. 
	
	The reciprocal is immediate because $\phi=\frac{w}{u}$ achieves the infimum for $\mbox{Cap}(\Si,g).$
	
\end{proof}

We are ready to prove a result that gives lower bounds for the mass in terms of the total mean curvature.

\begin{proposition}\label{import}
Let $g\in\mathcal{M}.$ If  $(M,g),\; n\ge 3,$ is asymptotically flat. We have  
	\begin{equation}\label{inq}
	\mathfrak{m}(g)\geq \frac{2\underline u }{(n-1)\omega_{n-1}}\int_{\Sigma}Hd\sigma,
	\end{equation}
where  $\underline{u}$ denotes the infimum of $u$ on $\Sigma$. 
The equality holds if and only if $\Delta_g  u=0$ in $M$, $D_\mu u=0$ on $S$ and $u_{|_{\Si}}$ is constant with $\underline u\geq2$.
\end{proposition}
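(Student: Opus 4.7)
The plan is to apply the Euclidean divergence theorem to the conformal factor $u$ on an exhaustion of $M$ and combine the conformal laws (\ref{conformal}) with the sign conditions built into the definition of $\mathcal{M}$. First I would invoke Lemma \ref{approxi} to reduce to the case when $g$ is harmonically flat near infinity, so that the expansion (\ref{exp:u}) is available; item iii) of the lemma then lets the inequality pass to the limit as the approximation parameter tends to zero.

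For $R$ large enough that $\Omega\subset B_R$, set $M_R:=M\cap B_R$ and use the divergence theorem to write
\[
\int_{M_R}\Delta u\,dv=\int_{\Sigma}D_{\nu_M}u\,d\sigma+\int_{S\cap B_R}D_{\mu}u\,d\sigma+\int_{\mathbb{S}^{n-1}_+(R)}D_r u\,d\sigma,
\]
where $\nu_M$ is the outward unit normal to $M$ along $\Sigma$. Each term is then interpreted via (\ref{conformal}) and the defining conditions of $\mathcal{M}$: nonnegativity of $R_g$ gives $\Delta u\le 0$, so the left-hand side is $\le 0$; minimality of $\Sigma$ in $g$ yields $D_{\nu_M}u=\frac{n-2}{2(n-1)}Hu$ with $H$ the (positive) Euclidean mean curvature of $\Sigma$; Euclidean minimality of $S$ together with $H_g\ge0$ on $S$ yields $D_{\mu}u\ge 0$; and the expansion (\ref{exp:u}) gives $\int_{\mathbb{S}^{n-1}_+(R)}D_r u\,d\sigma\to -\frac{(n-2)\omega_{n-1}}{2}\mathfrak{m}(g)$ as $R\to\infty$. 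Letting $R\to\infty$, discarding the nonnegative $S$-integral, and bounding $u\ge\underline{u}$ on $\Sigma$ delivers the inequality (\ref{inq}).

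For the rigidity, equality forces every discarded inequality to be sharp: $\Delta u\equiv 0$ in $M$, $D_{\mu}u\equiv 0$ on $S$, and $u\equiv\underline{u}$ on $\Sigma$. Extending $u$ by even reflection across $S$ produces a function harmonic on the doubled manifold, constant on the doubled $\Sigma$ and tending to $1$ at infinity; the minimality requirement $H_g=0$ on $\Sigma$ read through the conformal formula then pins down the boundary value $\underline{u}=2$ and identifies the reflected solution with the standard Schwarzschild potential, so $g$ must be the half Schwarzschild metric. The main obstacle I anticipate is the careful bookkeeping of sign conventions (outward normal of $M$ versus of $\Omega$, the sign of $H$, the direction of $\mu$ on $\partial\mathbb{R}^n_+$) so that the conformal formula (\ref{conformal}) is used consistently across the three boundary pieces, together with ensuring that Lemma \ref{approxi} can be arranged so as to preserve the defining hypotheses of $\mathcal{M}$ in a neighborhood of $\Sigma\cup S$ throughout the approximation.
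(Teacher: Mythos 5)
Your derivation of the inequality itself follows the same route as the paper: approximate by a harmonically flat metric via Lemma \ref{approxi}, apply the Euclidean divergence theorem to $u$ on $M\cap B_R$, convert the $\Sigma$-term using $H_g=0$ and (\ref{conformal}), discard the nonnegative $S$-term and the nonpositive $\int\Delta u$, and read off the mass from the flux at infinity. (Watch the constant in the flux term: integrating $D_r u$ over the coordinate \emph{hemisphere} with the normalization (\ref{exp:u}) gives $-\tfrac{(n-2)\omega_{n-1}}{4}\mathfrak m(g)$, not $-\tfrac{(n-2)\omega_{n-1}}{2}\mathfrak m(g)$; with your constant the final inequality loses a factor of $2$.)

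The genuine gap is in the equality case, specifically the claim $\underline u\geq 2$. You assert that ``the minimality requirement $H_g=0$ on $\Sigma$ read through the conformal formula pins down the boundary value $\underline u=2$.'' It does not: $H_g=0$ gives $\tfrac{2(n-1)}{n-2}D_\mu u+Hu=0$ on $\Sigma$, a relation between the normal derivative of $u$ and the Euclidean mean curvature, and places no constraint on the value of $u$ itself. The paper obtains $\underline u\ge 2$ by a separate argument: since $u$ is harmonic, Neumann on $S$, constant on $\Sigma$ and tends to $1$ at infinity, the function $(u-\underline u)/(1-\underline u)$ solves the capacity problem (\ref{mixed}), so $2\int|Du|^2=(n-2)\omega_{n-1}(\underline u-1)^2\,\mbox{Cap}(\Sigma)$ and the flux identity yields $(\underline u-1)\,\mbox{Cap}(\Sigma)=\mathfrak m(g)/2$; combining this with Proposition \ref{fenchel} and the assumed equality in (\ref{inq}) gives $(\underline u-1)\,\mbox{Cap}(\Sigma)\ge \tfrac{\underline u}{2}\mbox{Cap}(\Sigma)$, whence $\underline u\ge 2$. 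Your proposal contains no substitute for this step. In addition, you overreach by concluding already here that $g$ is the half Schwarzschild metric: the proposition only claims $\Delta_g u=0$, $D_\mu u=0$ on $S$, $u|_\Sigma$ constant and $\underline u\ge 2$; the identification with half Schwarzschild is carried out only in Theorem \ref{m}, where the hemisphere shape of $\Sigma$ is supplied by the equality case of the capacity inequality (Theorem \ref{main3}), not by the flux computation alone. Finally, for the ``only if'' direction you should argue, as the paper does with the approximating sequence $g_k$, that equality really forces $\Delta u\equiv 0$ and $D_\mu u\equiv 0$ for the \emph{original} metric and not merely for the harmonically flat approximations.
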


\begin{proof} 

Because Lemma \ref{approxi}, we may assume that near infinity we have that $R_g\equiv 0$ and $H_g\equiv 0$  which together with  (\ref{conformal})  imply  that  near infinity $u$ is harmonic and satisfies $D_\mu u=0$ on $S$. 
	From (\ref{conformal}) and divergence theorem, we see that  
	\begin{align*}
	\int_{I(\rho)}&\Delta  
	u dv= \int_{ \mathbb{R}_+^{n}\cap\{|x|=\rho\}}{D_r u }d\sigma_{\rho}-\int_{\Sigma}{D_{N} u}d\sigma-\int_{\partial_{\partial \mathbb{R}_+^{n}}I(\rho)}{D_{\mu} u}d\sigma\\
	&= -\mathfrak{m}(g)\omega_{n-1}\frac{n-2}{4}+O\Big(\frac{1}{\rho}\Big)+\frac{n-2}{2(n-1)}\Big(\int_{\Sigma}Hud\sigma+\int_{\partial_{\partial \mathbb{R}_+^{n}}I(\rho)}Hud\sigma\Big).
	\end{align*}
%	where 
	%$V={\partial_{\partial \mathbb{R}_+^{n}}I(\rho)
		%\mathbb{R}_+^{n}\cap\{|x|\leq \rho\})\backslash \Omega 
	%	}.$
	
	In the limit as $\rho\to\infty$ we see that
	\begin{align}\label{mass}
	\mathfrak{m}(g)&=&	\frac{1}{\omega_{n-1}}\int_{\mathbb R^{n}_+\backslash\Omega} R_g u^{-1}dv +\frac{2}{(n-1)\omega_{n-1}}
	\int_{\Sigma}Hud\sigma\\\nonumber
	& &+\frac{2}{(n-1)\omega_{n-1}}
	\int_{\partial_{\partial \mathbb{R}_+^{n}}(\mathbb R^{n}_+\backslash\Omega)}Hud\sigma.
	\end{align}
	 So the inequality (\ref{inq}) holds provided $R_g\geq0$  and $S$ is minimal.
	
	Suppose that the equality holds in (\ref{inq}) and  either  $\Delta_g  u<0$ in $M$ or $D_\mu u>0$ on $S$ somewhere.  Take again a sequence of metrics $\{g_k\},$ $g_k=u_k\delta,$ as in the proof of Proposition \ref{prop}. Thus (\ref{mass}) becomes
	\begin{align*}
		\mathfrak{m}(g)<\frac{2}{(n-1)\omega_{n-1}}
		\int_{\Sigma}Hu_kd\sigma,
	\end{align*}
	taking the limit $k\to\infty$, this contradicts the fact that we are assuming the equality on (\ref{inq}).

	It is easy to see that $u$ is equals to its minimum on $\Sigma$, but it remains to show that $\underline u\geq 2$. Indeed, an analogous calculation using the divergence theorem gives 
	\begin{align*}
		\int_{\mathbb R^n_+\backslash\Omega}u^2R_g dv&=-\int_{\mathbb R^n_+\backslash\Omega}|D u|^2dv- \omega_{n-1}(n-2)\frac{\mathfrak{m}(g)}{4}\\
		&+\underline u^2\frac{(n-2)}{2(n-1)} \Big(\int_{\Sigma}Hd\sigma+\int_{\partial_{\partial\mathbb R^n_+}(\mathbb R^n_+\backslash\Omega)}Hd\sigma\Big).
	\end{align*}
	
	Note that since  
	$$2\int_{\mathbb R^n_+\backslash\Omega}|D u|^2dv=(n-2)\omega_{n-1}(\underline u-1)^2\mbox{Cap}(\Sigma),$$
	 we can conclude using the equality in (\ref{inq}) that 
	\begin{equation}
	(\underline u -1)\mbox{Cap}(\Sigma)=\frac{\mathfrak{m}(g)}{2}\geq\frac{\underline u}{2}\mbox{Cap}(\Si).
	\end{equation}
	Therefore we have $\underline u\geq 2$. 
	
\end{proof}

Putting the above inequalities we can prove the main result of this section.

\begin{proof}[Proof of Theorem \ref{m}]  
	
	From Proposition \ref{fenchel}, \ref{prop} and \ref{import} it follows that 
	
	$$
	\mbox{Cap}(\Sigma,g)\le \mbox{Cap}(\Sigma)+\frac{\mathfrak{m}(g)}{2}\leq \mathfrak{m}(g).
	$$
	Therefore, we obtain  (\ref{ineq1}).
	
	The volumetric Penrose inequality (\ref{ineq2}) follows from  Theorem \ref{PFS},  Proposition \ref{fenchel} and \ref{import}:
		$$\frac{\mathfrak{m}(g)}{2}\geq\frac{2}{(n-1)\omega_{n-1}}\int_\Si Hd\sigma\geq\Big(\frac{\mbox{Vol}(\Omega)}{\mbox{Vol}(\mathbb{R}^n_+\cap \{|x|\leq1\})}\Big)^{\frac{n-2}{n}}.$$
	
	%We will now prove the rigidity statement. 
	To complete the proof, we must consider the cases of equalities. Suppose that (\ref{ineq1}) becomes equality. In particular (\ref{inq}) is also an equality and thus  we can apply Proposition \ref{import} to get  
	\begin{equation}\label{mixedsch}
	\begin{cases}
	\Delta_g  u=0\quad\text{in}\:\;M\\
	D_\mu u=0\quad\text{on}\:\;S\\
	u\equiv2\;\;\text{on}\:\;\Si.
	\end{cases}
	\end{equation}
	
    According to Theorem \ref{main3}, $\Si$ is a hemisphere. This together with (\ref{mixedsch}) imply that $(M,g)$ is isometric to the Riemannian half Schwarzschild manifold.
	 
	 Arguing similarly, if  the equality occurs in (\ref{ineq2}) we also have that $g$ is isometric to the Riemannian half Schwarzschild metric.	
	 
\end{proof}

\noindent{\bf Ackwnoledgement}\\
  I would like to thank  Professor A. Neves  for providing a wonderful scientific environment when I was visiting Imperial College London and where the first drafts of this work were written. Also, I would like to thank L. Pessoa for bringing \cite{IPS} to my  attention. %and for  discussions about capacity. 
While at Imperial College, I was supported by  CNPq/Brazil

\bibliographystyle{amsbook}

\end{document}